\title[broken windows only]{Thurston's broken windows only theorem revisited}
\author{Ken'ichi Ohshika}
\address{Department of Mathematics, Gakushuin University, Mejiro, Toshima-ku, Tokyo, Japan}
\email{ohshika@math.gakushuin.ac.jp}
\newcommand{\reals}{\mathbb{R}}
\newcommand{\integers}{\mathbb{Z}}
\newcommand{\naturals}{\mathbb{N}}
\newcommand{\hyperbolic}{\mathbb{H}^3}
\newcommand{\isom}{\mathrm{Isom}}
\newcommand{\ie}{i.e.\ }
\newcommand{\Fr}{\operatorname{Fr}}
\newcommand{\len}{\mathrm{length}}
\newcommand{\trlen}{\mathrm{trlength}}
\newcommand{\wind}{\mathrm{window}}
\newcommand{\tr}{\mathrm{tr}}
\renewcommand{\int}{\mathrm{Int}}
\newcommand{\teich}{\mathcal{T}}
\let \pslc \PSL
\newtheorem{theorem}{Theorem}[section]
\newtheorem{proposition}[theorem]{Proposition}
\newtheorem{lemma}[theorem]{Lemma}
\newtheorem{corollary}[theorem]{Corollary}
\theoremstyle{definition}
\theoremstyle{remark}
\begin{document}
\begin{abstract}
The \lq broken windows only theorem' is the main theorem of the third paper among a series of the paper in which Thurston proved his uniformisation theorem for Haken manifolds.
In this chapter, we show that the second statement of this theorem is not valid, giving a counter-example.
We also give a weaker version of this statement with a proof.
In the last section, we speculate on how this second statement was intended to constitute a proof of the bounded image theorem, which constituted a key of the uniformisation theorem.
The proof of the bounded image theorem was  obtained only quite recently, although its weaker version, which is sufficient for the proof of the uniformisation theorem, had already been proved.
\end{abstract}
\maketitle
\sloppy
\setcounter{section}{-1}
\section{Introduction}
One of the most celebrated achievements of Thurston is the uniformisation of Haken manifolds, which motivated the general geometrisation theorem,  proved later by Perelman.
Thurston planned to publish the proof of the uniformisation theorem in a series of seven papers.
Only the first one was officially published, and the second and the third papers exist as preprints, and are now included in the collected works of Thurston \cite{ThC}.
In the first paper, the compactness theorem for the deformation spaces of acylindrical hyperbolic 3-manifolds is proved.
The third paper generalises this compactness to a general atoroidal 3-manifold with incompressible boundary as a relative convergence theorem, which is dubbed as the \lq broken windows only' theorem (Theorem 0.1 in \cite{Th3}).
This should have constituted the key part of the proof of what is called the bounded image theorem, one of the main ingredients of the entire proof.
The theorem consists of two statements and we shall show that in fact the second statement is not valid, giving a counter-example.

Although Thurston did not write a proof of the bounded image theorem,  a weaker version of the bounded image theorem, the bounded orbit theorem, which first appeared in \cite{ThO}, is sufficient for the proof of the uniformisation theorem as was observed also by Morgan \cite{Mo}, and a detailed proof of this weaker version was given in Kapovich \cite{Kap}.
A proof of the bounded image theorem by Thurston had been unknown, except for the case of acylindrical case given by Kent \cite{Kent} and Brock--Bromberg-Canary--Minsky \cite{BBCM}, but quite recently a proof relying on modern techniques of Kleinian group theorem (in particular the work of Brock--Bromberg--Canary--Lecuire \cite{BBCL} among other works) was  given by Lecuire--Ohshika \cite{LO}.
It is still worthwhile to consider how Thurston himself tried to prove the bounded image theorem using the tools available at that time.
We shall show how this should have been done provided that the broken windows only theorem were true, in the last section.

We now state this broken windows only theorem and explain what is problematic.
For that, we first need to explain some terms used in the theorem, whose precise definitions will be given in the following section.
Let $(M,P)$ be a pared manifold, that is,  a compact irreducible 3-manifold $M$ with a union $P$ of incompressible tori and annuli on $\partial M$ such that $\partial M \setminus P$ is incompressible, and every $\pi_1$-injective, immersed torus or properly immersed annulus can be properly homotoped into $P$.
We denote by $AH(M,P)$ the set of complete hyperbolic metrics on $\int M$ modulo isotopy such that each component of $P$ corresponds to a parabolic end.
The set $AH(M,P)$ corresponds one-to-one to the set of faithful discrete representations of $\pi_1(M)$ into $\pslc$ sending $\pi_1(P_0)$ to parabolic a parabolic subgroup for each component $P_0$ of $P$, modulo conjugacy.

We consider the characteristic submanifold $\Phi$ of $(M,P)$ in the sense of Jaco--Shalen--Johannson, consisting of $I$-bundles (over incompressible surfaces which are neither disc nor annuli nor M\"{o}bius bands), solid tori intersecting $\partial M$ along incompressible annuli on their boundaries, and thickened torus intersecting $\partial M$ along a torus and annuli.
The window $F$ of $(M,P)$ is an $I$-bundle constructed from the characteristic submanifold as follows.
All $I$-bundle components of $\Phi$ are contained in $F$.
For each solid torus or thickened torus component $V$ of $\Phi$, we consider each component of $\Fr V$, which is an annulus.
If it is not properly isotopic into the $I$-bundle component we consider its thin regular neighbourhood, which is an $I$-bundle over an annulus, and make it be included in $F$.
Otherwise we abandon that component.
If there are more than two properly homotopic frontier components, we do this procedure only for one among them.
% if $V$ is properly homotopic into some $I$-bundle component of $\Phi$, we just abandon it.
%If $V \cap \partial M$ consists of two annuli but is not properly homotopic into an $I$-bundle component, then we make it be included in $X$.
%If $V \cap \partial M$ consists of either one or more than two components, then we replace $V$ with a thin regular neighbourhood of $\Fr V$, which is a union of $I$-bundles over annuli, and make it be included in  $X$.
Thus obtained $F$, which is an $I$-bundles over possibly disconnected surface, is called the window of $(M,P)$.
The window of $(M,P)$ is denoted by $\wind(M, P)$ and its base space by $ \mathrm{wb}(M, P)$.

Now  Thurston's \lq broken windows only' theorem reads as follows:

\begin{theorem}[Thurston \cite{Th3}]
\label{Thurston}
If $\Gamma \in \pi_1(M)$ is any subgroup which is conjugate to the fundamental group
of a component of $M \setminus\wind(M, P )$, then the set of representations of $\Gamma$ in $\mathrm{Isom}(\hyperbolic)$
induced from $AH(M, P)$ are bounded, up to conjugacy.

Given any sequence $N_i \in AH(M, P)$, there is a subsurface with incompressible boundary
$x \subset \mathrm{wb}(M, P)$ and a subsequence $N_{i(j)}$ such that the restriction of the associated sequence
of representations $\rho_{i(j)} : \pi_1(M) \to \isom(\hyperbolic)$ to a subgroup $\Gamma \subset \pi_1(M)$ converges if and only
if $\Gamma$ is conjugate to the fundamental group of a component of $M\setminus X$, where $X$ is the total
space of the interval bundle above $x$. Furthermore, no subsequence of $\rho_{i(j)}$ converges on any
larger subgroup.
\end{theorem}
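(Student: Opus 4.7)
The plan is to handle the two statements sequentially: Statement~1 by a reduction to Thurston's compactness theorem for acylindrical pared manifolds from the first paper of the series, and Statement~2 by a diagonal extraction built upon Statement~1.

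For Statement~1, let $M_0$ be the closure of a component of $M \setminus \wind(M,P)$, and set $P_0 := (P \cap M_0) \cup (\Fr \wind(M,P) \cap M_0)$. I would first verify that $(M_0, P_0)$ is an \emph{acylindrical} pared manifold: any essential annulus in $M_0$ would yield an essential annulus in $(M,P)$ that is isotopic into the characteristic submanifold and hence into the window, contradicting $M_0 \cap \Int \wind(M,P) = \emptyset$. Given $\rho \in AH(M,P)$, the restriction to $\pi_1(M_0)$ defines a hyperbolic structure on $\Int M_0$; although the frontier annuli need not carry parabolic holonomy, one can either double $M_0$ along these annuli and analyze the doubled representation, or invoke a relative form of acylindrical compactness permitting uniformly bounded cylinder geometry along $P_0 \setminus P$. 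Either route yields boundedness of the induced representations up to conjugacy.

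For Statement~2, given a sequence $\rho_i \in AH(M,P)$, Statement~1 already produces a subsequence along which the restrictions to the $\pi_1$ of each component of $M \setminus \wind(M,P)$ converge, so the remaining divergence must be localized inside the window. Since the essential subsurfaces of $\mathrm{wb}(M,P)$ up to isotopy form a countable set, I would enumerate them as $y_1, y_2, \ldots$ and perform a diagonal extraction so that at stage $k$ the subsequence decides whether the restrictions to the fundamental groups of the components of the complement of the interval bundle over $y_k$ converge. The candidate subsurface $x$ is then assembled as the maximal $y_k$ along which divergence persists.

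The main obstacle is the \emph{maximality} clause: the requirement that no subsequence of $\rho_{i(j)}$ converges on any strictly larger subgroup than those coming from the complementary components of $X$. Extracting convergence on a prescribed family is a matter of compactness, but forbidding additional convergence requires a rigidity assertion to the effect that any extra algebraic convergence forces convergence on an interval bundle strictly larger than $X$. The delicate point is the interaction between $I$-bundle and solid-torus components of the characteristic submanifold, since frontier annuli of solid-torus components can be isotoped across several $I$-bundles, allowing Dehn-twist-type divergences to be absorbed into unexpected subgroups. I expect this to be precisely the point at which the stated theorem fails, and where the counter-example announced in the abstract intervenes.
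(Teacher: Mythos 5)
Your plan cannot succeed, because the second sentence of this theorem is false --- and establishing that is the main point of the paper. \cref{Thurston} is quoted from Thurston \cite{Th3}; the first sentence is accepted here on the basis of Thurston's own argument (with alternative proofs via Morgan--Shalen theory and by Brock--Bromberg--Canary--Minsky \cite{BBCM} noted in the introduction), and \cref{sec:counter} then constructs an explicit counter-example to the second sentence. Your closing paragraph suspects exactly this, which is the right instinct, but the suspicion should be the verdict rather than a caveat appended to a proof attempt.

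There is also a gap in your argument for the first sentence. The complementary components of $\wind(M,P)$ are not all acylindrical pared manifolds. Only thin neighbourhoods of the frontier annuli of solid torus and thickened torus pairs are absorbed into the window (and even these are discarded when they are isotopic into an adjacent $I$-pair, as happens in a book of $I$-bundles); the solid tori and thickened tori themselves survive as complementary components. A solid torus pair $V$ of valency $n\geq 3$, with $\Fr V$ as paring locus, has essential annuli: any annulus in $V$ that divides the set of frontier annuli nontrivially is essential, and for $n\geq 4$ such an annulus need not be properly isotopic into $\wind(M,P)$ at all, since both complementary sides can contain more than one frontier annulus and hence cannot be pushed into a single page. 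So the step ``any essential annulus in $M_0$ would be isotopic into the window'' fails, and your reduction to acylindrical compactness breaks precisely at the components responsible for the counter-example. Boundedness for these abelian-$\pi_1$ components requires a separate argument.

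For the second sentence, the failure mechanism is more concrete than ``Dehn-twist divergences absorbed into unexpected subgroups.'' The counter-example is a primitive book of $I$-bundles with $n\geq 4$ pages and $m_i = (\tau_{A_1}\circ\tau_{A_3}^{-1})_*^i(m_0)$. The restrictions $m_i|\pi_1(S_{1,2})$ diverge, while $m_i|\pi_1(S_{1,3})$ and $m_i|\pi_1(S_{2,4})$ both converge, because the two twists cancel in the relevant covers. Since the pages themselves converge, any candidate $x\subset \mathrm{wb}(M,P)$ is forced to be a union of annular neighbourhoods of some of the curves $\partial\Sigma'_j$; but (c) and (d) of \cref{counter} rule out neighbourhoods of $\partial\Sigma'_1$ and $\partial\Sigma'_2$, which leaves $S_{1,2}\times I$ inside a single component of $M\setminus X$, contradicting (b). The root obstruction is the cyclic order: $\{1,3\}$ and $\{2,4\}$ interleave around $\partial V$, and no subsurface of the window base can cut $M$ so that both $\pi_1(S_{1,3})$ and $\pi_1(S_{2,4})$ appear as complementary-component subgroups. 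This is exactly the shuffling obstruction, and it explains why the corrected version, \cref{rectified Thurston}, must allow passing to a shuffled $(M',P')$ and cutting along (possibly singular) annuli $\mathcal A$ inside the solid-torus and thickened-torus pairs, in addition to the vertical $I$-bundle $X$. A diagonal extraction over subsurfaces of $\mathrm{wb}(M,P)$ alone cannot locate the needed cutting system, because the needed annuli do not lie in the window.
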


Thurston gave a fairly detailed proof for the first sentence.
There is also an alternative approach for the first part using Morgan--Shalen theory \cite{MS1, MS2, MS3}, and also a stronger version of this first part has been proved by Brock--Bromberg--Canary--Minsky \cite{BBCM}.
We shall give a counter-example to the second sentence of \cref{Thurston}.
The problem lies in the fact that there may be a solid torus $V$ in the characteristic submanifold of $M$ which intersects $\partial M$ at more than three annuli. Such a solid torus may bring about a $3$-manifold  homotopy equivalent to $M$ which is not homeomorphic to $M$, by changing the order of attaching the components of $M \setminus V$ along $\partial V$.
We call this operation {\em shuffling} along $V$.

%In \S \ref{broken windows}, we shall explain why these examples contradict Thurston's statement, and in \S \ref{rectification} how this statement can be rectified using Theorem \ref{main}.
We shall show that even though \cref{Thurston} does not hold in the original form, a weaker version as follows can be proved making use of the Morgan--Shalen theory and the efficiency of pleated surfaces due to Thurston \cite{Th2}.

\begin{theorem}
\label{rectified Thurston}
Let $(M,P)$ and $N_i$ be as given in \cref{Thurston}.
Then there is a pared manifold $(M', P')$ obtained from $(M,P)$ by shuffling around solid pairs and thickened torus pairs of the characteristic submanifold of $(M,P)$ for which the following hold.

Let $W'$ be the characteristic submanifold of $(M',P')$.
Let $\mathcal W$ be the union of its $I$-pairs, and $\mathcal V$ the union of solid torus pairs and thickened torus pairs of $W'$.
Let $p\colon \mathcal W \to w$ be the fibration as an $I$-bundle for $\mathcal W$.
Then there is an  incompressible subsurface $x$ of $w$ and a union of essential annuli (possibly with singular axes) $\mathcal A$  in $\mathcal V$ such that the restriction of the associated sequence
of representations $\rho_{i(j)} : \pi_1(M) \to \isom(\hyperbolic)$ to a subgroup $\Gamma \subset \pi_1(M)$ converges if and only
if $\Gamma$ is conjugate to the fundamental group of a component of $M\setminus (X \cup \mathcal A)$ under the identification of $\pi_1(M)$ with $\pi_1(M')$, where $X=p^{-1}(x)$.
(When we cut $M$ along an annulus with singular axis, we regard a regular neighbourhood of the singular axis as contained in both sides after the operation.)
Furthermore, no subsequence of $\rho_{i(j)}$ converges on any larger subgroup.
\end{theorem}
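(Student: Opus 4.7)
The plan is to combine the first sentence of \cref{Thurston} with Morgan--Shalen theory and the efficiency of pleated surfaces, inserting a shuffling step to accommodate the phenomenon responsible for the counter-example to the original second statement. First I would apply the first sentence of \cref{Thurston} and pass to a subsequence so that, for every component $C$ of $M \setminus \wind(M,P)$, the restriction $\rho_i|_{\pi_1(C)}$ converges up to conjugacy. The problem is thereby reduced to analysing the window, that is, the union $\mathcal W$ of its $I$-pair components and the union $\mathcal V$ of its solid-torus and thickened-torus pair components.

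For each component of $\mathcal W$, I would realise relevant simple closed curves on the base $w$ by pleated surfaces and invoke Thurston's efficiency results to identify which curves retain bounded length along the sequence. These curves fill an incompressible subsurface, taken to be $x$, whose $I$-bundle preimage $X = p^{-1}(x)$ is what must be cut out. Where lengths blow up, Morgan--Shalen theory supplies a small action of $\pi_1(M)$ on an $\mathbb R$-tree dual to the degeneration; a subgroup admits a convergent subsequence precisely when it acts elliptically on this tree. For each component of $\mathcal V$, the relevant subgroups are cyclic or $\mathbb Z^2$, and convergence is governed by traces of a few specific elements. Here the counter-example shows that when a solid torus is attached to $M \setminus V$ along more than three annuli, the observed convergence pattern among the associated cyclic subgroups can be incompatible with the given gluing order. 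Permuting the attaching order---the shuffling operation---yields a new manifold $(M',P')$ in which the converging and diverging annular loops sit in separate complementary components, after which the annuli whose associated loops still diverge are collected into $\mathcal A$.

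With $(M',P')$, $X$ and $\mathcal A$ fixed, the two directions of the if-and-only-if should follow as follows. Convergence on the fundamental group of each component of $M' \setminus (X \cup \mathcal A)$ holds by construction: Step 1 covers the part outside the window, bounded pleated maps cover $\mathcal W \setminus X$, and bounded holonomy covers the annuli retained inside $\mathcal V$. Conversely, any loop crossing $X$ or $\mathcal A$ essentially translates on the $\mathbb R$-tree produced by Morgan--Shalen, forcing any representation of a group containing such a loop to diverge; this simultaneously gives the maximality statement.

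The main obstacle I foresee is making the shuffling step precise. One must show that a \emph{single} $(M',P')$ simultaneously accommodates the convergence data at every solid-torus and thickened-torus pair, and that $\mathcal A$ can be chosen coherently so that the resulting decomposition produces exactly the subgroups on which $\rho_{i(j)}|_\Gamma$ converges, treating correctly the annuli with singular axes whose regular neighbourhoods must be kept on both sides after cutting. This is essentially a combinatorial problem of permuting annular attachments compatibly with prescribed analytic data, and I expect it to be the most delicate part of the argument.
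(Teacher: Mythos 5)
Your overall strategy—combining the first part of \cref{Thurston}, the efficiency of pleated surfaces, Morgan--Shalen theory, and a shuffling step—is the same as the paper's, but there are gaps in how the pieces fit together. First, a direction slip: you say the curves that \emph{retain bounded length} fill $x = p^{-1}(X)$; it should be the opposite, since the theorem asserts that restrictions converge on fundamental groups of components of $M\setminus(X\cup\mathcal A)$, so $X$ is built from the \emph{diverging} part of the window. Second, and more seriously, you treat the passage from the Morgan--Shalen tree to the decomposition as a single step, asserting that \lq\lq a subgroup admits a convergent subsequence precisely when it acts elliptically on the tree.'' The \lq\lq if'' direction of that equivalence is not available for free; the paper instead deduces convergence on the $I$-pair part of the window directly from the efficiency of pleated surfaces (\cref{diverge/converge}), and only afterwards applies Morgan--Shalen to whatever is left. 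Moreover, one pass through Morgan--Shalen does not suffice: the annular lamination dominating the first tree may miss further divergence inside the solid/thickened torus pairs, so the paper iterates the construction, producing a nested sequence $(M_0,P_0)\supset(M_1,P_1)\supset\cdots$ that terminates only because there are finitely many disjoint incompressible annuli in $M$. Your proposal does not account for this iteration.

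Finally, the part you flag as the delicate obstacle—choosing a single shuffled $(M',P')$ coherently and choosing $\mathcal A$—is precisely where the main work lies, and you do not indicate how to carry it out. The paper isolates this as a separate theorem (\cref{dual annular}): the Morgan--Shalen/Morgan--Otal result only produces a \emph{morphism} from the dual tree of an annular lamination to the limit tree, which may be folded (as shown explicitly by \cref{not Skora}). To upgrade it to an equivariant \emph{isomorphism}, one applies Skora's theorem on each $I$-pair to replace the lamination there, and then, on each solid or thickened torus pair $V$, unfolds the finite subtree $\bar T_V$ as in Skora's proof, realises it as a dual tree of arcs in the base disc or annulus of $V$, and reads off from the cyclic order of the endpoints which shuffling of $V$ is needed. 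Annuli with singular axes arise exactly when the $\integers_p$-stabilised subtree $t_c$ contains an edge. Without this step, your argument does not actually produce the promised $(M',P')$, $x$, and $\mathcal A$; it only reduces the theorem to the assertion that such data exist.
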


Unfortunately,  this weaker version is not sufficient for the proof of the bounded image theorem.

%
%$\reals$-trees and isometric actions on them were first considered as a way to understand degeneration of hyperbolic structures on 3-manifolds by Morgan and Shalen.
%They showed that if there is a sequence of hyperbolic metrics $m_i$ on a compact 3-manifold with boundary $M$ which does not converge in the character variety, after taking a subsequence, the asymptotic behaviour of the translation lengths of elements of $\pi_(M)$ are described by those of an action of $\pi_1(M)$ on an $\reals$-tree.
%Paulin \cite{} and Bestvina  \cite{} gave  alternative proofs of this fact using rescaled Gromov limit of $\{m_i\}$ regarded as a sequence of actions of $\pi_1(S)$ on $\hyperbolic^3$.
%
%Morgan and Shalen also gave a geometric description of such an action of $\pi_1(M)$ on an $\reals$-tree appearing in the limit.
\section{Preliminaries}
In this section, we explain notions and terminologies which are necessary for  \cref{Thurston,rectified Thurston} and their proofs.
\subsection{JSJ decompositon}
In this subsection, we shall review the theory of Jaco--Shalen and Johannson concerning characteristic decomposition of compact irreducible 3-manifolds by tori and annuli and homotopy equivalences between  such 3-manifolds.

Unless otherwise mentioned, throughout this section $M$ will denote a compact irreducible 3-manifold with (possibly empty) incompressible boundary.
We always assume 3-manifolds to be orientable.
Jaco--Shalen and Johannson proved that  there exists a disjoint family of embedded incompressible tori $T_1, \dots, T_p$ and properly embedded incompressible annuli $A_1, \dots, A_q$ with the following properties.
\begin{enumerate}
\item
Cutting $M$ along $(T_1 \cup \dots \cup T_p\cup A_1 \cup \dots \cup A_q)$, we obtain compact 3-submanifolds of $M$ each of which is one of the following.
\begin{enumerate}
\item
A Seifert fibred manifold whose frontier in $M$ consists of fibred tori and fibred annuli.
%We call each of such Seifert fibred manifolds  a {\em Seifert piece}.
\item
An atoroidal manifold, \ie a manifold such that every $\pi_1$-injective map from a torus to the manifold is homotoped into a boundary component.
We call each of such manifolds a {\em hyperbolic piece}.
(In particular the definition implies that each of the tori $T_1, \dots , T_p$ is disjoint from the interior of any hyperbolic piece.)
\end{enumerate}
%\item Each of the annuli $A_1, \dots , A_q$ is disjoint from the interior of any Seifert piece.
\item If a hyperbolic piece $N$ contains some of $A_1, \dots, A_q$, then the 3-manifold $N'$ obtained by cutting $N$ along the annuli contained in $N$ consists of three types of manifolds.
\begin{enumerate}
\item An $I$-bundle over a surface such that its intersection with $N$ coincides with the associated $\partial I$-bundle and is an incompressible subsurface of $\partial N$.
Such a component is called an {\em $I$-pair}\index{$I$-pair}, and we denote each of them in the form of a pair $(\Phi, \Sigma)$, where $\Phi$ is an $I$-bundle and $\Sigma = N \cap \partial M$, or simply by $\Phi$ if there is no fear of confusion.
\item
A solid torus $V$ whose intersection with $\partial N$ consists of annuli which are incompressible both on $\partial V$ and $\partial N$.
Such a component $(V, V \cap \partial N)$ is called a {\em solid torus} pair\index{solid torus pair}.
\item A thickened torus $U\cong S^1 \times S^1 \times I$ such that $S^1 \times S^1 \times \{0\}$ is contained in $\partial N$ and $S^1 \times S^1 \times \{1\}$ intersects $\partial N$ along a union of incompressible annuli no two of which are homotopic in $\partial N$.
For such a component $(U, U \cap \partial N)$ is called a {\em thickened torus pair}\index{thickened torus pair}.
\item An acylindrical manifold, \ie a compact 3-manifold such that every properly embedded annulus is relatively properly homotopic into a boundary component.
\end{enumerate}
%The other components of $N'$ are either acylindrical or solid tori or thickened tori (i.e., a product of a torus and an interval), which we call  {\em acylindrical pieces}, {\em solid torus pairs}, {thickened torus pairs} respectively.
%Here we say that a compact 3-manifold is acylindrical if every properly embedded annulus is relatively homotopic into a boundary component.
%A solid torus pair or a thickened torus pair is denoted in the form $(V, C)$, where $V$ is a solid torus or a thickened torus and $C=V \cap \partial M$.
%The second one $C$ is always a union of parallel annuli on $\partial V$ which are essential both on $\partial V$ and $\partial M$ when $V$ is a solid torus.
%When $V$ is a thickened annulus $C$ contains one torus component which is a boundary component of $V$, and all the other components are parallel annulus on the other boundary component of $V$.
%\item
%
\end{enumerate}

We note that every $\pi_1$-injective proper map from an annulus into a hyperbolic piece is properly homotopic into either an $I$-pair or a solid torus pair or a thickened torus pair.
We take a family of incompressible tori and annuli having the above properties which is minimal with respect to the inclusion among the families with the same properties.
We call a decomposition of $M$ by such a minimal family a {\em JSJ-decomposition}\index{JSJ-decomposition}.
Jaco--Shalen \cite{JS} and Johannson \cite{Jo} proved that JSJ-decomposition is unique up to ambient isotopy.

We now assume that $M$ is atoroidal.
In this case, the union of the $I$-pairs, the solid torus pairs and the thickened torus pairs is called the {\em characteristic submanifold}\index{characteristic submanifold} of $M$.
The {\em window}\index{window} $F$ of $M$ is constructed from the characteristic submanifold as follows.
We first consider the $I$-pairs in the characteristic submanifold, and let them be included in $F$.
For a solid torus pair or a thickened torus pair $V$,  we replace it with  regular neighbourhoods $V'_1, \dots , V'_k$ of the components of $\Fr V$, which are  solid tori and can be regarded as $I$-bundles over annuli.
If $V_j'$ is properly homotopic into an $I$-pair or another solid torus already added to $F$, we discard it, and otherwise we let $V_j'$ be included in $F$.

\subsection{Homotopy equivalences and books of $I$-bundles}
Each $I$-pair, solid torus pair and thickened torus pair in the characteristic submanifold of an atoroidal Haken manifold may give rise to a homotopy equivalence which is not realised by a homeomorphism under some conditions as will be explained below.
Let $(\Phi , \Sigma)$ be an $I$-pair in a JSJ-decomposition of an atoroidal Haken manifold $M$.
By definition, $\Phi$ has an $I$-bundle structure whose projection to its base surface we denote by $p\colon \Phi \rightarrow B$.
Suppose that $\partial B$ has more than one component, and let $b$ be a component of $\partial B$.
Its inverse image $p^{-1}(b)$ is an annulus which is a union of fibres.
We parametrise $p^{-1}(b)$ as $\{(z, t)\mid z=e^{ i \theta}, \theta \in [0,2\pi), t \in [0,1]\}$ so that the second coordinate corresponds to fibres.
We consider an involution $\iota$ on $p^{-1}(b)$ taking $(z,t)$ to $(z, 1-t)$.
Now we remove $N$ from $M$ and paste it back to $M\setminus N$ by reversing the direction of each fibre in $p^{-1}(b)$ by  the involution $\iota$ just defined.
Then we get another 3-manifold $M'$ homotopy equivalent to $M$, and this operation gives a homotopy equivalence from $M$ to $M'$.
We call this operation  {\em flipping}\index{flipping} on the $I$-pair $(\Phi, \Sigma)$, following Canary-McCullough \cite{CM}, where homotopy equivalences of pared manifolds are thoroughly studied.
% and say that a homotopy equivalence is obtained by flipping.
We note that $M'$ is non-orientable if the annulus $p^{-1}(b)$ is non-separating.
The same kind of operation can be defined on a frontier component of a solid torus pair or a thickened torus pair in the characteristic submanifold, which we also call  flipping.

Next let $(V, C)$ be a solid torus pair in the JSJ-decomposition of $M$.
We call the number of components of $V \cap \partial M$ the {\em valency} of $V$.
It is evident that the valency is also equal to the number of the components of $\Fr V$.
Suppose that the valency of $V$ is greater than $2$.
Then we can remove $V$ from $M$ and paste it back after changing the cyclic order of the annuli in $\Fr V$.
In this way, we get a 3-manifold homotopy equivalent to $M$.
We call this operation {\em shuffling}\index{shuffling} around $V$.
We can define shuffling around a thickened torus pair $(U, U \cap \partial M)$ in the same way, to be changing the order of  pasting the components of $M \setminus U$ to $\Fr U$.

Jaco--Shalen \cite{JS} and Johannson \cite{Jo} proved that any homotopy equivalence between two compact irreducible 3-manifolds with (non-empty) incompressible boundaries is obtained by repeating flipping and shuffling.

We now consider a special case of 3-manifolds obtained by pasting more than two product $I$-bundles along one solid torus.
Let $V$ be a solid torus and take a simple closed curve $\alpha$ on $\partial V$ homotopic to the $p$-time iteration of a core curve with $p \geq 1$.
Letting $n$ be an integer greater than $2$, we consider $n$ compact surfaces $\Sigma_1, \dots , \Sigma_n$ each of which has only one boundary component, denoted by $c_1 \dots c_n$.
We then consider product $I$-bundles $\Sigma_1 \times I, \dots , \Sigma_n \times I$, and paste them along $c_1 \times I , \dots , c_n \times I$ to $\partial V$ so that all of these annuli are identified with disjoint annuli with core curves isotopic to $\alpha$ which lie on $\partial V$ in the (cyclic) order of the subscripts.
We get a compact, irreducible, atoroidal 3-manifold $N$ in this way, and it is called a {\em book of (product) $I$-bundles}\index{book of $I$-bundles} with $n$ pages.
Such manifolds were studied in the context of deformation spaces of Kleinian groups for the first time by Anderson-Canary \cite{AC}.
In particular when $p=1$,  the book of $I$-bundles is said to be {\em primitive}.
The characteristic submanifold of $N$ consists of $n+1$ components, \ie $I$-pairs corresponding to $\Sigma_1 \times I, \dots , \Sigma_n \times I$ and a solid torus pair $(V, V \cap \partial N)$.
The window consists of $n$ components: the characteristic submanifold with the solid torus pair $(V, V \cap \partial N)$ removed.
By the theory of Jaco-Shalen and Johannson mentioned above, any homotopy equivalence from $N$ to another compact irreducible 3-manifold is obtained by shuffling along $V$ since no flipping gives rise to a new manifold.

\subsection{Pared manifolds}
A {\em pared manifold}\index{pared manifold} is a pair $(M,P)$ of a compact irreducible 3-manifold $M$ (with possibly compressible boundary) and a disjoint union of incompressible tori and annuli $P$ lying on $\partial M$ with the following properties.
\begin{enumerate}[(i)]
%\item Every component of $\partial M \setminus P$ is incompressible.
\item Any $\pi_1$-injective immersion from a torus is homotopic into $P$.
\item Any $\pi_1$-injective proper  immersion of an annulus whose boundaried is mapped into $P$ is properly homotopic into $P$.
\end{enumerate}
We also assume the following:
\begin{enumerate}[(i)]
\setcounter{enumi}{2}
\item Every component of $\partial M \setminus P$ is incompressible.
\end{enumerate}

We define the characteristic submanifold of  a pared manifold $(M,P)$ in the same way as before, just imposing the condition that no component of  $P$ is contained in the associated $\partial I$-bundle of an $I$-pair.

\subsection{Deformation spaces}
Let $M$ be a compact, irreducible, atoroidal $3$-manifold.
We say that $M$ admits a convex compact hyperbolic structure when there is a convex compact hyperbolic 3-manifold $N$ and a homeomorphism from $M$ to $N$ is given.
The hyperbolic manifofold $N$ can be embedded in a complete hyperbolic manifold homeomorphic to $\int N$ as a convex core.
Therefore a convex compact hyperbolic structure on $M$ also induces a complete hyperbolic structure on $\int M$.

Each complete hyperbolic structure on $\int M$ corresponds to a faithful discrete representation of $\pi_1(M)$ into $\pslc$, which is uniquely determined up to conjugacy.
The set of faithful discrete representations of $\pi_1(M)$ into $\pslc$ modulo conjugacy is denoted by $AH(M)$\index{$AH(M)$} and is given a topology induced from the topology of element-wise convergence of the representation space.
We consider the space of convex compact hyperbolic structures on $M$ modulo isotopy, which we denote by $QH_0(M)$, and is regarded as a subspace of $AH(M)$ by using extensions of such structures to complete hyperbolic structures as mentioned above.
By the theory of Ahlfors, Bers, Kra, Maskit, Marden and Sullivan, there is a covering map $q \colon \teich(\partial M) \to QH_0(M)$.
(The definition of the map in this general setting is due to Bers \cite{Bers2}, Kra \cite{Kra}, Maskit \cite{Mas} and Marden \cite{Mar}.
Sullivan \cite{Su} showed that the map is surjective.) 
Under the present assumption that  $\partial M$ is incompressible $q$ is known to be a homeomorphism (\cite{Bers2,Kra,Mas,Mar}).
In the special case when $M$ is homeomorphic to $S \times I$ for a closed surface, this parametrisation is what is called the Bers simultaneous uniformisation \cite{Ber}, and is expressed as $qf \colon \teich(S) \times \teich(\bar S) \to QH_0(M)$, where $\teich(S)$ denotes the Teichm\"{u}ller space of oriented $S$, whereas $\teich(\bar S)$ denotes that of $S$ with orientation reversed.
Representations contained in $QH_0(M)$ in this case are called quasi-Fuchsian representations.

For a pared manifold $(M,P)$, we define $AH(M,P)$ to be the set of faithful discrete representations of $\pi_1(M)$ into $\pslc$ which send  each non-trivial element of $\pi_1(P_0)$ to a parabolic element for each component $P_0$ of $P$.
The set modulo isotopy of convex finite-volume complete hyperbolic structures on $(M,P)$ for which $P$ corresponds to parabolic ends forms a subspace of $AH(M,P)$, which we denote by $QH_0(M,P)$.
As in the case of convex compact hyperbolic structures, there is a homeomorphism $q \colon \teich(\partial M \setminus P) \to QH_0(M)$.

\subsection{Morgan--Shalen compactification}
In this subsection we summarise some results contained in Morgan-Shalen \cite{MS1, MS2, MS3}, which will be used in the proof of \cref{rectified Thurston} in \cref{rectification}.

Let $M$ be an irreducible, atoroidal compact 3-manifold with incompressible boundary.
We may also consider a pared manifold $(M,P)$ as will be explained later.
What interests us is the case when $M$ has a boundary component which is not a torus.
Let $\{\rho_i\}$ be a sequence in $AH(M)$.
Suppose that $\{\rho_i\}$ does not have a convergent subsequence in $AH(M)$.
Fix a generator system $g_1, \dots , g_k$ of $\pi_1(M)$ such that the trace function $\tr_{g_1}, \dots , \tr_{g_k}$ generates the trace functions of elements of $\pi_1(M)$ as a ring.
(See Culler--Shaen \cite{CS} for the existence of such a system.)
Let $\mu_i$ be $\max_{j=1}^k\{\trlen(\rho_i(g_j))\}$, where $\trlen$ denotes the translation length.
By changing the ordering of the generator system and passing to a subsequence, we can assume that $\mu_i$ is attained by $\trlen(\rho_i(g_1))$.
Put a basepoint $x_i$ in $\hyperbolic$ which is within bounded distance from the axis of $\rho_i(g_1)$.
Consider the rescaled hyperbolic space $\trlen(\rho_i(g_1))^{-1} \hyperbolic$ with basepoint at $x_i$, and take an equivariant Gromov limit.
Then we get an isometric action of $\pi_1(M)$ on an $\reals$-tree.
(Morgan--Shalen \cite{MS1}, Paulin \cite{Pa}, Bestvina \cite{Be})

Here an {\em $\reals$-tree}\index{$\reals$-tree} is a geodesic space in which for any two points, there is only one simple arc connecting them.
In other words, it is a $0$-hyperbolic space in sense of Gromov.
An isometric action of $G$ on an $\reals$-tree $T$ is said to have {\em small edge stabilisers} when for any non-trivial arc $a$ in $T$, its stabiliser is a virtually abelian group.
It is said to be {\em minimal} when $T$ has no $\rho_G(G)$-invariant proper subtree.

In the case when we consider a pared manifold $(M,P)$, we assume that for any component $P_0$ of $P$ and any element $\gamma \in \pi_1(P_0)$, the translation length of $\rho_i(\gamma)$ is bounded as $i \to \infty$.
Then the limit action on an $\reals$-tree has a property that for any element $\gamma$ in $\pi_1(P_0)$ as above, the action of $\gamma$ on the tree is trivial.

\begin{theorem}[Morgan--Shalen, Paulin, Bestvina]
\label{Morgan-Shalen 1}
As a rescaled Gromov limit, we obtain a minimal isometric action of $\pi(M)$ on an $\reals$-tree $T$ with  small edge stabilisers.
\end{theorem}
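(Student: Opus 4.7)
The plan is to combine Gromov's compactness theorem for pointed metric spaces with the softening effect of rescaling on the Gromov hyperbolicity constant of $\hyperbolic$. First I would check that $\mu_i \to \infty$: otherwise, bounded translation lengths for all generators allow one, by standard minimax reconjugation arguments, to extract a subsequence converging in $AH(M)$, contradicting the hypothesis. Rescaling the metric on $\hyperbolic$ by $1/\mu_i$ then gives a sequence of $\delta_i$-hyperbolic geodesic spaces with $\delta_i = \delta/\mu_i \to 0$, on each of which $\pi_1(M)$ acts isometrically via $\rho_i$ with maximum generator translation length equal to $1$. The bounded displacement of $x_i$ by the generators, together with the pointed equivariant Gromov compactness theorem, yields a subsequential equivariant Gromov--Hausdorff limit $(T, x_\infty)$ carrying an isometric action of $\pi_1(M)$; since a complete $0$-hyperbolic geodesic space is an $\reals$-tree, $T$ is an $\reals$-tree.

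For minimality, I would simply replace $T$ by its unique minimal $\pi_1(M)$-invariant subtree. This subtree is nonempty because $\rho_i(g_1)$ has translation length exactly $1$ in the rescaled metric, a property that passes to the equivariant limit; thus $g_1$ acts on $T$ with positive translation length and its axis lies in every invariant subtree. In the pared case, the boundedness assumption on $\trlen(\rho_i(\gamma))$ for $\gamma \in \pi_1(P_0)$ forces the translation length of $\gamma$ on $T$ to be $0$, so each such $\gamma$ fixes a point; since $\pi_1(P_0)$ is abelian and commuting elliptic isometries of an $\reals$-tree share a common fixed point, $\pi_1(P_0)$ has a common fixed point in $T$.

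The main obstacle, and heart of the theorem, is establishing the small edge stabiliser property. Suppose $\gamma \in \pi_1(M)$ fixes a non-degenerate segment $[p,q] \subset T$ pointwise; then the translation length of $\rho_i(\gamma)$ in the rescaled metric tends to $0$, so in the original metric it is $o(\mu_i)$. Approximating $[p,q]$ by geodesic segments $\sigma_i \subset \hyperbolic$ of length $\sim \mu_i \, d_T(p,q)$, one shows that $\rho_i(\gamma)$ nearly preserves $\sigma_i$ pointwise, so its axis or fixed set in $\hyperbolic$ stays uniformly close to $\sigma_i$. Given two elements $\gamma, \delta$ of the stabiliser of $[p,q]$, the axes of $\rho_i(\gamma)$, $\rho_i(\delta)$, and of their commutators therefore all cluster around $\sigma_i$, and a Margulis-lemma argument applied along $\sigma_i$ forces $\langle \rho_i(\gamma), \rho_i(\delta) \rangle$ eventually into a virtually abelian subgroup of $\isom(\hyperbolic)$---either a loxodromic cyclic subgroup with axis near $\sigma_i$ or a parabolic subgroup fixing an endpoint of $\sigma_i$ at infinity. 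Pulling this back through the limit shows that the stabiliser of $[p,q]$ is virtually abelian, \ie small.
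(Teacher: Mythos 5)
The paper itself does not prove \cref{Morgan-Shalen 1}; it is stated as a cited result, attributed to Morgan--Shalen \cite{MS1}, Paulin \cite{Pa}, and Bestvina \cite{Be}, and the surrounding text (the choice of $\mu_i$, the basepoint $x_i$ near the axis of $\rho_i(g_1)$, the rescaled space $\trlen(\rho_i(g_1))^{-1}\hyperbolic$, the equivariant Gromov limit) simply recalls the framework in which those authors prove it. There is therefore no in-paper proof to compare against; what you have written is a sketch of the Paulin--Bestvina geometric proof, which is the route the paper tacitly adopts (Morgan--Shalen's original argument in \cite{MS1} is the algebro-geometric one via valuations on the character variety and is essentially disjoint from yours).

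Judged on its own merits, your sketch has the right architecture and the right lemmas, but two steps are compressed to the point of hiding the real work. First, the claim that the translation length $1$ of $g_1$ ``passes to the equivariant limit'' is exactly the nontrivial continuity statement of Paulin (the equivariant Gromov topology coincides with the marked-length-spectrum topology on this class of actions); you should at least cite it, since it is what guarantees the limit action is nontrivial and hence that the minimal subtree is nondegenerate. Second, the small-edge-stabiliser step is where discreteness of the $\rho_i$ must enter, and your appeal to ``a Margulis-lemma argument'' glosses over the actual mechanism: one does not control $d(x,\rho_i(\gamma)x)$ by the Margulis constant (it is only $o(\mu_i)$, which is unbounded), but rather one uses that the midpoint of the approximating segment $\sigma_i$ is moved by a \emph{uniformly bounded} amount by the commutator $\rho_i([\gamma,\delta])$ — this is the thin-triangle estimate that forces two nearly-axis-sharing loxodromics to commute or else violate discreteness — and then faithfulness of $\rho_i$ transfers the elementariness of $\rho_i(\langle\gamma,\delta\rangle)$ back to $\langle\gamma,\delta\rangle\le\pi_1(M)$. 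As written, ``the axes cluster around $\sigma_i$, so Margulis gives virtual abelianness'' is not a proof, it is the statement of Bestvina's Lemma. With those two points fleshed out, the sketch would be a correct reproduction of the Paulin--Bestvina argument.
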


A codimension-1 lamination in a compact $3$-manifold $M$ is a closed subset $L$ of $M$ such that for any point $x \in M$ there is a coordinate neighbourhood $U$ of $M$ in the form $D^2 \times [0,1]$, where $U \cap L=D^2 \times J$ with a closed subset $J$ of $[0,1]$.
We can define a leaf of $L$ in the same way as for a foliation.
A codimension-1 lamination is said to be measured when there is a positive Borel measure on $[0,1]$ supported on $J$ which is invariant under coordinate changes.

We equip the set of codimension-1 measured laminations with a topology induced from the weak topology on the set of measures on transverse arcs.
A positively weighted disjoint union of embedded surfaces is an example of a codimension-1 measured lamination.
A codimension-1 measured lamination is said to be incompressible when it is a limit of positively weighted unions of properly embedded incompressible surfaces.
An incompressible codimension-1 measured lamination is said to be {\em annular} when it is a limit of weighted properly embedded incompressible annuli.
%The Euler characteristic defined on surfaces, can be extended to weighted union of surfaces, and then continuously to codimension-one measured laminations.

\begin{proposition}[Morgan--Shalen \cite{MS2}]
\label{annular}
Any codimension-1 annular lamination $L$ is properly isotopic into the characteristic submanifold $W$ of $M$.
For each $I$-pair $W_0$ of $W$, the intersection $L \cap W_0$ can be isotoped so that it becomes vertical with respect to the fibration.
\end{proposition}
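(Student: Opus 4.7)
The plan is to approximate $L$ by disjoint weighted unions of essential embedded annuli, isotope each approximant into $W$ simultaneously using the equivariant form of the JSJ/characteristic submanifold theorem, and then pass to the limit. Write $L = \lim_{n \to \infty} \sum_i c_i^{(n)} A_i^{(n)}$, where each $A_i^{(n)}$ is a properly embedded incompressible annulus in $M$ and the annuli in the $n$-th approximant are pairwise disjoint; such approximations exist by the definition of an annular measured lamination.

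First I would prove the statement at the level of each approximant. For a single disjoint family $\mathcal{A}_n = \bigsqcup_i A_i^{(n)}$ of properly embedded incompressible annuli, the Characteristic Pair Theorem of Jaco--Shalen and Johannson guarantees a proper ambient isotopy of $M$ carrying $\mathcal{A}_n$ into $W$. Concretely, one argues inductively on the number of components: each annulus $A_i^{(n)}$ can be isotoped into $W$, and since the ambient $3$-manifold is irreducible and the annuli are pairwise disjoint and $\pi_1$-injective, one can arrange the isotopies to be supported in pairwise disjoint regular neighbourhoods, preserving disjointness. After this step, each $A_i^{(n)}$ lies in some component of $W$; inside an $I$-pair component it can be further isotoped to a vertical annulus by the standard classification of essential surfaces in $I$-bundles (essential annuli in an $I$-bundle over a surface of nonpositive Euler characteristic are isotopic to preimages of essential arcs or essential simple closed curves in the base, in either case vertical), and inside a solid torus pair or thickened torus pair the annulus can be isotoped to be a union of ``vertical'' annuli with respect to the Seifert fibration.

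Next I would pass to the limit. Fix a finite regular neighbourhood system for $W$ and cover $\Fr W$ by finitely many product charts. Because the isotopies constructed above can be taken to be small perturbations in a neighbourhood of $\Fr W$ (straightening across the frontier annuli) together with standard vertical straightening inside each piece, one obtains isotopies $\Phi_n\colon M \times [0,1] \to M$ with uniformly bounded tracks. The measures $\Phi_n(1)_*\bigl(\sum_i c_i^{(n)} A_i^{(n)}\bigr)$ are then supported in $W$, with the vertical condition satisfied in each $I$-pair. Using the weak compactness of the space of transverse measures on $M$ and the fact that the Hausdorff limit of supports contained in the compact set $W$ is contained in $W$, a subsequence converges to a measured lamination $L'$ supported in $W$ and properly isotopic to $L$; inside any $I$-pair $W_0$ it is a limit of vertical weighted annuli, hence itself vertical with respect to the fibration of $W_0$.

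The main obstacle is the last convergence step: the isotopies $\Phi_n$ constructed component by component need not converge in $\mathrm{Diff}(M)$, since different approximants may require combinatorially different moves across the frontier of $W$. The standard way around this is to fix once and for all a normal position in the PL category (using a triangulation of $M$ compatible with $W$ and with the fibration of each $I$-pair, and placing each $A_i^{(n)}$ in normal form), so that the isotopies become canonical on each simplex and have uniformly bounded complexity. This reduction to normal form, together with the observation that in normal form disjoint essential annuli that are not parallel to $\Fr W$ must lie in $W$ and be vertical inside $I$-pairs, is what drives the proof; it is exactly the technical content of Section~II.3 of Morgan--Shalen.
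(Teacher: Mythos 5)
The paper does not prove this proposition; it cites it directly from Morgan--Shalen \cite{MS2}, so there is no in-text proof to compare against.

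Your outline has the right skeleton, and you correctly flag the bottleneck: the isotopies $\Phi_n$ carrying each approximant $\mathcal{A}_n$ into $W$ need not converge, so the weak limit of $\Phi_n(1)_*(L_n)$ need not be isotopic to $L$. The patch you propose does not close that gap. Putting each $A_i^{(n)}$ into normal form with respect to a fixed triangulation controls the combinatorics of each individual approximant, but it does not control the track length of the isotopy from the \emph{given} position of $L_n$ (the one realizing the weak convergence $L_n\to L$) to normal position; that length grows with the normal coordinates of the annuli, which are unbounded as $n\to\infty$. So the claim that the isotopies "become canonical on each simplex and have uniformly bounded complexity" is precisely what would have to be proved, and as stated it is false. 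It is also not automatic that a normal essential annulus not parallel to $\Fr W$ lies inside $W$; this requires choosing the triangulation compatibly with $\Fr W$ and a separate argument, not just the Characteristic Pair Theorem applied annulus by annulus.

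The argument in \cite{MS2} sidesteps the limit-of-isotopies problem by working directly with $L$ rather than with isotopies of its approximants: one puts the lamination $L$ itself (a single isotopy) into a position where it is carried by one compact incompressible branched surface with annular sectors, and then isotopes that fixed compact branched surface into $W$ and makes it vertical in the $I$-pairs; the lamination, being carried, rides along. Since only one isotopy of a compact object is needed, no convergence of a sequence of isotopies ever arises. To salvage your "approximate and take a limit" route you would need to arrange, after a single fixed isotopy, that all the $\mathcal{A}_n$ are simultaneously carried by a common branched surface already lying in $W$ -- which is the Morgan--Shalen reduction restated, not a shortcut around it.
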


Let $L$ be an incompressible codimension-1 measured lamination, and consider the universal cover $\tilde M$ of $M$.
(In our setting, $\tilde M$ is always simply connected.)
Then each leaf of $L$ can be lifted to a topological plane in $\tilde M$, and the union of lifted leaves constitute a codimension-1 measured lamination in $\tilde M$.

For an incompressible measured lamination $L$, we can construct an $\reals$-tree {\em dual to $L$} in the following way.
In the universal cover $\tilde M$, we collapse each component of $\tilde M \setminus \tilde L$ to a point,  replace isolated leaves with arcs, and put a metric coming from the lift of the transverse measure of $L$.
Then we get an $\reals$-tree on which $\pi_1(M)$ acts by isometries.
We call this the $\reals$-tree {\em dual to} $L$, and denote it by $T_L$.
We note that we can construct an $\reals$-tree dual to a measured geodesic lamination on a hyperbolic surface in the same way, on which its fundamental group acts by isometries.

A continuous map $f$ from an $\reals$-tree $T$ to another $\reals$-tree $T'$ is said to be a {\em morphism} if for each point $x$ in $T$, there is an geodesic segment $I$ in $T$ containing $x$ such that $f|I$ is an isometry.
An isomorphism from $T$ to $T'$ is a surjective isometry.
A morphism may fail to be isomorphism, for $f$ may be \lq folded' at some points of $T$.

\begin{theorem}[Morgan--Shalen \cite{MS3}, Morgan--Otal \cite{M-O}]
\label{dominating lamination}
Let $\rho \colon \pi_1(M) \to \isom(T)$ be a minimal isometric action on an $\reals$-tree $T$ with small edge stabilisers.
Then there is an annular codimension-1 lamination in $M$ with an $\pi_1(M)$-equivariant morphism $t \colon T_L \to T$, where $T_L$ is the $\reals$-tree dual to $L$.

Furthermore, let $(M,P)$ be  a pared manifold, and $\rho \colon \pi_1(M) \to \isom(T)$  as above has a property that for any component $P_0$ of $P$, its fundamental group  $\pi_1(P_0)$ lies in the kernel of $\rho$.
Then the annular codimension-1 lamination $L$ above can be taken to be disjoint from $P$.
\end{theorem}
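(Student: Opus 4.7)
The plan is to follow Morgan--Shalen and Morgan--Otal: build a $\pi_1(M)$-equivariant simplicial map from $\tilde M$ to $T$, extract a codimension-$1$ preimage, straighten it to an incompressible measured lamination, and then use the small edge stabiliser hypothesis to force annularity. First, I would fix a smooth triangulation $\tau$ of $M$ and lift it to an equivariant triangulation $\tilde\tau$ of $\tilde M$. Sending one vertex of each $\pi_1(M)$-orbit to an arbitrary point of $T$ and extending equivariantly, I would then fill in $1$-simplices by geodesic segments in $T$, $2$-simplices by the canonical map onto the tripod spanned by the three vertex images, and $3$-simplices analogously, producing a $\pi_1(M)$-equivariant map $\tilde f\colon\tilde M\to T$ that is piecewise-linear on each simplex.

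Next I would take the preimage of a suitably generic finite subset of $T$ under $\tilde f$, obtaining a properly embedded $2$-complex in $\tilde M$ equipped with a counting transverse measure. Passing to a limit under finer and finer generic subsets, together with standard surgery to remove compressing discs, yields a $\pi_1(M)$-invariant incompressible measured lamination $\tilde L$ descending to a measured lamination $L$ in $M$. To upgrade incompressibility to annularity, I would invoke the small edge stabiliser hypothesis: each leaf of $\tilde L$ is contained in the preimage of a single arc of $T$, so its $\pi_1(M)$-stabiliser is a subgroup of an arc stabiliser and hence virtually abelian; combined with $\pi_1$-injectivity this forces each leaf of $L$ to be topologically an annulus, a torus, or a limit thereof, and the atoroidality of $M$ lets me discard torus leaves so that $L$ is annular in the sense of the definition.

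The dual tree $T_L$ is constructed by collapsing complementary components of $\tilde L$ to points, inserting arcs for isolated leaves, and equipping the result with the metric coming from the lifted transverse measure, and $\tilde f$ descends to a $\pi_1(M)$-equivariant map $t\colon T_L\to T$. The morphism property holds because $\tilde f$ is affine on each $3$-simplex, so a sufficiently short arc in $\tilde M$ transverse to $\tilde L$ maps isometrically onto an arc of $T$, giving the local-isometry condition demanded by the definition of morphism (possibly after choosing a segment that avoids a folding locus of $\tilde f$). For the pared version, the hypothesis $\pi_1(P_0)\subset\ker\rho$ says that $\pi_1(P_0)$ acts trivially on $T$, so I would equivariantly homotope $\tilde f$ to collapse a regular neighbourhood of each lift of a component of $P$ to a single point in $T$; the preimage lamination is then automatically disjoint from $P$. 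The main obstacle will be the annularity step: ruling out higher-genus leaves rigorously requires the delicate surgery analysis of Morgan--Otal, deploying the small stabiliser hypothesis leaf-by-leaf and using the atoroidality assumption to dispose of extraneous torus leaves, all while preserving equivariance and the transverse measure.
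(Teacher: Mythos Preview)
The paper does not prove this theorem: it is stated as a result quoted from Morgan--Shalen \cite{MS3} and Morgan--Otal \cite{M-O}, with no argument given beyond the attribution. So there is no proof in the paper to compare your sketch against.

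As a freestanding outline of the Morgan--Shalen resolution argument, your sketch is broadly on track, but the annularity step contains a genuine gap. You write that ``each leaf of $\tilde L$ is contained in the preimage of a single arc of $T$, so its $\pi_1(M)$-stabiliser is a subgroup of an arc stabiliser and hence virtually abelian.'' This is not correct: a leaf of the lamination is contained in the preimage of a \emph{point} of $T$ (or collapses to a point of $T_L$, which then maps to a point of $T$), and point stabilisers under a small-edge-stabiliser action need not be virtually abelian---indeed, the fundamental groups of the complementary pieces of $L$ are exactly such stabilisers and are typically large. The small edge stabiliser hypothesis constrains arc stabilisers, not point stabilisers, so the inference to ``virtually abelian leaf group'' fails as stated.

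The actual route to annularity in \cite{MS2,MS3} is more indirect: one first produces an incompressible codimension-$1$ measured lamination, and then uses three-manifold topology (normal surface theory, the structure of incompressible surfaces, and the characteristic submanifold) together with the tree action to conclude that leaves can be taken to be annuli or tori; atoroidality of $M$ then disposes of tori. The small edge stabiliser condition enters not by bounding leaf stabilisers directly, but by controlling how the complementary pieces can be glued along leaves when one tries to simplify the lamination. Your sketch would need to replace the one-line stabiliser argument with this deeper analysis.
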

We note that this theorem only guarantees the existence of morphism, not an isomorphism.
As we shall see in \cref{sec:counter}, there is an example for which we cannot have an annular codimension-1 lamination giving an isomorphism.

We need to generalise slightly the notion of annular codimension-1 lamination for \cref{rectified Thurston,dual annular}.
We define an {\em annulus with a singular axis}\index{annulus with a singular axis} as follows.
Let $V$ be a solid torus pair in a pared manifold $(M,P)$.
Then $V \cap \partial M$ consists of annuli on $\partial V$ whose core curves are homotopic in $V$ to the $p$-time iteration of a core curve of $V$ for some positive integer $p$.
This means that there is a Seifert fibration $\pi \colon V \to D$ for a disc $D$ with a cone point $\mathfrak c$ of order $p$  with respect to which each component of $V \cap \partial M$ is vertical, i.e. it is expressed as $\pi^{-1}(\alpha)$ for a simple arc on $\partial D$.
We denote the union of the simple arcs corresponding to $V \cap \partial M$ by $A$.
An {\em annulus with a singular axis} is defined to be a singular annulus of the form $p^{-1}(\beta_1 \cup\dots \cup \beta_k)$ for the union of simple arcs $\beta_1, \dots , \beta_k$ with $k \geq 2$ in $D$  connecting $A$ to $\mathfrak c$ which intersect each other only at $\mathfrak c$.
It has a singular axis, which is equal to $\pi^{-1}(\mathfrak c)$ around which any non-singular fibre wraps $p$ times.
We call such an annulus weighted if each of $\beta_1, \dots , \beta_k$, and correspondingly its preimage, has a positive weight.
In  \cref{dual annular}, we shall allow such weighted annuli with singular axes in solid torus pairs to be included as components of an annular codimension-1 measured lamination.
%
%\begin{comments}
%Describe annulus with exceptional core.
%\end{comments}

\subsection{Skora's theorem}
In the previous subsection, we talked about an action on an $\reals$-trees dual to a codimension-1 measured lamination in a 3-manifold.
In this subsection, in the case of (possibly non-orientable) surfaces of hyperbolic type, any minimal  isometric action on an $\reals$-tree with small edge stabilisers is dual to a measured geodesic lamination on the surface.

%An isometric action $\rho_G \colon G \to \isom(T)$ on an $\reals$-tree $T$ is said to be {\em minimal} when $T$ has no $\rho_G(G)$-invariant proper subtree.
The following theorem was proved by Skora \cite{Sk}.

\begin{theorem}[Skora]
\label{Skora}
Let $S$ be a closed hyperbolic surface, and suppose that $\rho \colon \pi_1(S) \to \isom(T)$ is a minimal action with small edge stabilisers.
Then there is a measured geodesic lamination $\lambda$ on $S$ with a $\pi_1(S)$-equivariant isomorphism between the $\reals$-tree $T_L$ dual to $L$ and $T$. 
\end{theorem}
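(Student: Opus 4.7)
The plan is to produce the measured geodesic lamination $\lambda$ using the dual-lamination machinery of \cref{dominating lamination}, and then to show that the resulting morphism is forced to be an isomorphism in the two-dimensional case. One applies \cref{dominating lamination} either directly to $S$ (a two-dimensional analogue being available by the same argument) or by feeding the thickened product $M = S \times I$ into the 3-manifold statement. In the latter case $\pi_1(M) = \pi_1(S)$, and \cref{annular} applied to $S \times I$ shows that the resulting annular codimension-$1$ measured lamination is properly isotopic into the characteristic submanifold, which here is all of $S \times I$ fibred as a product; after being made vertical with respect to this fibration it has the form $\lambda \times I$ for a measured geodesic lamination $\lambda$ on $S$, whose dual $\reals$-tree is canonically identified with the tree dual to the 3-dimensional lamination. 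Either way we obtain $\lambda$ together with a $\pi_1(S)$-equivariant morphism $f \colon T_\lambda \to T$, and minimality of the action applied to the invariant subtree $f(T_\lambda)$ forces $f$ to be surjective.

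Next I would upgrade $f$ to an isomorphism. If $f$ were not an isometry on some segment, there would exist a fold: segments $\sigma_1, \sigma_2$ in $T_\lambda$ sharing an endpoint whose interiors are mapped injectively onto a common segment of $T$. Translated through the duality, this says that two distinct complementary regions of $\tilde\lambda$ in the universal cover $\tilde S$, or two transversals separated by an isolated leaf, are collapsed to the same point of $T$. Choosing a folded point where the transverse measure is locally supported, one produces two leaves of $\tilde\lambda$ whose transverse measures are identified in $T$; using equivariance under $\pi_1(S)$ and pulling the fold back by a suitable translate, one forces two leaves of $\tilde\lambda$ that are collapsed together to project to geodesics in $S$ meeting transversely in $\tilde S$, contradicting pairwise disjointness of leaves of the geodesic lamination $\lambda$.

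The main obstacle is the local behaviour of $f$ at branch points of $T_\lambda$, which correspond to complementary ideal polygons of $\lambda$, and at points of countable valence coming from isolated leaves. At such points folds can occur combinatorially without immediately forcing two leaves to cross, and the generic-point argument above must be preceded by a careful surgery on $\lambda$ that shifts the fold into its regular locus. A cleaner modern route is to invoke the Rips machine of Bestvina--Feighn: any minimal small action of a finitely presented group on an $\reals$-tree decomposes as a graph of actions with simplicial, axial, and surface-type vertex actions. Since $\pi_1(S)$ contains no rank-two free abelian subgroup, axial components are excluded, and since $S$ is two-dimensional, the simplicial and surface pieces assemble into precisely a measured geodesic lamination on $S$ whose dual tree is $\pi_1(S)$-equivariantly isometric to $T$.
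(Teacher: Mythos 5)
The paper does not prove this statement; it is cited directly to Skora \cite{Sk} as a known result and used as a black box (together with the remark that it extends to surfaces with boundary or punctures when peripheral elements act trivially). So there is no in-paper proof to compare against, and your proposal must be judged on its own terms.

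Your first paragraph is essentially the standard starting point (and in fact Morgan--Otal \cite{M-O}, cited alongside \cref{dominating lamination}, already produces a measured geodesic lamination $\lambda$ on $S$ with an equivariant morphism $T_\lambda \to T$ directly, without the detour through $S\times I$). The genuine gap is in the second paragraph, in the step where you eliminate folds. You assert that a fold, pulled back by a suitable deck transformation, ``forces two leaves of $\tilde\lambda$ that are collapsed together to project to geodesics in $S$ meeting transversely in $\tilde S$,'' contradicting disjointness of leaves. This does not follow. A fold is a feature of the morphism $f\colon T_\lambda \to T$, not of the lamination $\lambda$ itself: two disjoint leaves of $\tilde\lambda$ (or two disjoint complementary regions) can perfectly well be identified in $T$ by $f$ without any crossing occurring in $\tilde S$, and indeed for the initial resolution produced by Morgan--Otal such folds really do occur in general. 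The content of Skora's theorem is precisely that one can \emph{choose} the lamination so that the resolution becomes an isomorphism, and the known proofs accomplish this either by an iterative unfolding/surgery process on the lamination combined with a careful use of the small-stabilizer hypothesis (Skora's original argument), or by running the Rips machine and ruling out axial, thin, and exotic components for surface groups (Bestvina--Feighn). Your third paragraph correctly names the Rips-machine route, but only gestures at why the non-surface components are excluded and why the surface pieces reassemble into a lamination on $S$ itself; as written, neither route is carried through, so the proposal does not constitute a proof.
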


We note that even in the case when $S$ has punctures or boundaries, \cref{Skora} holds under the assumption that elements of $\pi_1(S)$ corresponding to loops around single punctures or boundaries act on $T$ trivially.

\section{Counter-example}
\label{sec:counter}
In this section, we shall construct a counter-example to \cref{Thurston}.
We consider a primitive book of product $I$-bundles explained in the previous section consisting of more than three pages.
It is a special case of Thurston's theorem that such a 3-manifold admits convex compact hyperbolic structure in its interior, but even without using Thurston's theorem, we can construct such a structure just by the Klein--Maskit combination theorem (see \cite{Masbook} for various versions of the combination theorem).
To fix symbols, let $M$ be a primitive book of $I$-bundles which is a union of product $I$-bundles $\Sigma_1 \times [0,1], \dots , \Sigma_n \times [0,1]$ and a solid torus $V$, where each $\Sigma_j$ has only one boundary component, and $\partial \Sigma_i \times [0,1]$ is attached to $\partial V$ along an essential annulus $A_i$ whose core curve is homotopic to a core curve of $V$.

%When the axis of $A_i$ is parallel in $V$ to a longitude, $M$ is called a primitive book of $I$-bundles.

We order the $I$-bundles so that the attaching annuli $A_1, \dots, A_n$ lie on $\partial V$ in this (cyclic) order.
We assume that the ordering is compatible with the parametrisation of $[0,1]$, that is, $\partial \Sigma_i \times \{1\}$ and $\Sigma_{i+1} \times \{0\}$ are juxtaposed.
We let $\Sigma_i'$ be the complement of a thin annular neighbourhood of $\partial \Sigma_i$ in  $\Sigma_i$.
The characteristic submanifold $W$ of $M$ consists of a solid torus $V$ and $I$-bundles $\Sigma_1' \times [0,1], \dots , \Sigma'_n \times [0,1]$, and the window consists of $\Sigma_1' \times [0,1], \dots , \Sigma'_n \times [0,1]$.
We call the boundary component of $A_i$ nearer to $A_{i+1}$, \ie $\partial \Sigma_i \times \{1\}$ (we regard $A_{n+1}$ as $A_{1}$) the upper boundary.
For $1 \leq i < j \leq n$, we  denote by $B_{i,j}$ the closure of a component of the complement of $A_i \cup A_j$ in $\partial V$ which contains the upper boundary of $A_i$.
We let $S_{i,j}$ be $\Sigma_i \times \{1\} \cup  B_{i,j} \cup \Sigma_j \times \{0\}$ which is a closed surface embedded in $M$.
%Let $N_k$ be the component of the closure of $M \setminus V$ containing $A_k$.

We now define the (right) Dehn twist of $M$ along $A_j$.
The right Dehn twist of $M$ along $A_j$ is obtained by cutting $M$ along $A_i$ into $\Sigma_j \times I$ and the rest, and glue them back after twisting  $A_i$ on $\Sigma_j \times I$ by $2\pi$ in the direction of $S^1$  keeping $I$-direction unchanged to the right when we look at it from the inside of $\Sigma_j \times I$. 
In other words, the right Dehn twist along $A_j$ is a product of the right Dehn twist around $\Fr \Sigma_j$ and the identify on the fibre direction.
We denote this homemorphism by $\tau_{A_j} \colon M \to M$.
The left Dehn twist along $A_j$ is defined to be $\tau_{A_j}^{-1}$.

\begin{proposition}
\label{counter}
Let $M$ be a primitive book of  product $I$-bundles of $n$ pages.
Fix a convex cocompact hyperbolic structure $m_0$ on $M$, which is also regarded as a point in $AH(M)$.
Let $\{m_i\in AH(M)\}$ be a sequence obtained from $m_0$ by pushing forward $m_0$ by the composition of the $i$-time iteration of the right Dehn twist $\tau_{A_1}$ along $A_1$ and the $i$-time iteration of the left Dehn twist $\tau_{A_3}^{-1}$ along $A_3$, \ie we define $m_i=(\tau_{A_1} \circ \tau_{A_3}^{-1})_*^{i}(m_0)$.
Then the following hold:
\begin{enumerate}[(a)]
\item
The sequence $\{m_i\}$ does not have a convergent subsequence in $AH(M)$.
\item The sequence of restrictions $\{m_i|\pi_1(S_{1,2})\}$ does not have a convergent subsequence in $AH(S_{1,2} \times I)$.
\item
The restrictions $m_i|\pi_1(S_{1,3})$ converge up to conjugation.
\item
The restrictions $m_i|\pi_1(S_{2,4})$ converge up to conjugation.
\end{enumerate}

\end{proposition}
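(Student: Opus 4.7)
The plan is to reduce everything to a precise computation of how the outer automorphism $\phi_\ast := (\tau_{A_1}\circ\tau_{A_3}^{-1})_\ast$ acts on $\pi_1(M)$. Choosing a basepoint in the interior of $V$ and using van Kampen, I would write $\pi_1(M) = H_1 \ast_{\langle c\rangle} H_2 \ast_{\langle c\rangle} H_3 \ast_{\langle c\rangle} H_4$, where $H_j := \pi_1(\Sigma_j)$ and the amalgamating element $c$ is simultaneously the generator of $\pi_1(V)$ and the boundary class $\partial\Sigma_j$ in each $H_j$. A standard Dehn-twist-on-an-annulus computation, carried out by tracking how a representing path from the basepoint in $V$ into $\Sigma_j\times I$ must cross $A_j$ exactly once, shows that modulo inner automorphisms $\tau_{A_j,\ast}$ acts as conjugation by $c^{\epsilon_j}$ on $H_j$ and as the identity on $H_k$ for $k\neq j$, where $\epsilon_j\in\{\pm 1\}$ depends on the orientation convention of the twist. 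The main obstacle is verifying the sign relation $\epsilon_1=-\epsilon_3$: because $\Sigma_1\times I$ and $\Sigma_3\times I$ sit at opposite cyclic positions around $\partial V$, the "inside of $\Sigma_j\times I$" normal used to orient the right Dehn twist reverses direction relative to the $c$-direction on $\partial V$. Granting this sign computation, $\phi_\ast$ is conjugation by a single element $c$ on both $H_1$ and $H_3$ and the identity on $H_2$ and $H_4$.

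Iterating and setting $\tilde c := m_0(c)\in\pslc$ (loxodromic since $m_0$ is convex cocompact), we obtain $m_i(h) = \tilde c^{-i}m_0(h)\tilde c^i$ for $h\in H_1\cup H_3$ and $m_i(h) = m_0(h)$ for $h\in H_2\cup H_4$. Parts (c) and (d) then fall out immediately. For (d), $\phi^i_\ast$ is the identity on $\pi_1(S_{2,4}) = H_2 \ast_{\langle c\rangle} H_4$, so the restriction is literally constant in $i$. For (c), since $c$ lies in $\pi_1(S_{1,3}) = H_1 \ast_{\langle c\rangle} H_3$, the restriction of $\phi^i_\ast$ to this subgroup coincides with the inner automorphism by $c^i$; consequently $m_i|_{\pi_1(S_{1,3})}$ is simultaneously conjugate in $\pslc$ to $m_0|_{\pi_1(S_{1,3})}$ by $\tilde c^{-i}$, and hence is constant as a point of $AH(S_{1,3}\times I)$.

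For (a) and (b) it suffices to exhibit a single element whose trace diverges. I would take generic $h_1\in H_1$ and $h_2\in H_2$ chosen so that $m_0(h_1)$ and $m_0(h_2)$ do not preserve the axis endpoints of $\tilde c$ on $\partial\hyperbolic$, and set $\gamma := h_1 h_2$, which lies in $\pi_1(S_{1,2}) \subset \pi_1(M)$. Then $m_i(\gamma) = \tilde c^{-i}m_0(h_1)\tilde c^i m_0(h_2)$, and diagonalising $\tilde c$ shows that one entry of $\tilde c^{-i}m_0(h_1)\tilde c^i$ grows like $|\lambda|^{2i}$, where $\lambda$ is the loxodromic eigenvalue of $\tilde c$; this pairs with a non-zero entry of $m_0(h_2)$ to produce a summand of $\tr\bigl(m_i(\gamma)\bigr)$ of the same order. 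Hence $\tr(m_i(\gamma))\to\infty$, and because trace is invariant under conjugation of the representation, no choice of $g_i\in\pslc$ can make $g_i m_i g_i^{-1}(\gamma)$ bounded. This rules out convergence both of $\{m_i\}$ on $\pi_1(M)$ and of $\{m_i|_{\pi_1(S_{1,2})}\}$ on $\pi_1(S_{1,2})$, proving (a) and (b).
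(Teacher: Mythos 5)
Your approach is genuinely different from the paper's. The paper proves (b), (c), (d) by passing to the covers $\tilde M_{1,2}$ and $\tilde M_{1,3}$ of $M$ associated with $\pi_1(S_{1,2})$ and $\pi_1(S_{1,3})$, lifting the convex compact structure and the Dehn twists there, and using Bers' parametrisation: for (b) the twist only lifts to a nontrivial twist on $\tilde A_1$ and the divergence is deduced by quoting \cite{OhT} or \cite{BBCL}; for (c) both $A_1$ and $A_3$ lift to properly embedded annuli that are \emph{parallel} in the compact core $C_{1,3}\cong S_{1,3}\times I$, so the lifted right twist along $\tilde A_1$ and the lifted left twist along $\tilde A_3$ cancel as mapping classes, giving $\tilde m_i=\tilde m_0$. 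Your argument instead computes the outer automorphism $\phi_\ast=(\tau_{A_1}\circ\tau_{A_3}^{-1})_\ast$ directly on the amalgamated product $\pi_1(M)=H_1\ast_{\langle c\rangle}\cdots\ast_{\langle c\rangle}H_n$. The description of $\tau_{A_j,\ast}$ as conjugation by a power of $c$ on $H_j$ and the identity on the complementary factor is correct, and your trace-growth argument for (a), (b) is a nice elementary substitute for the paper's appeal to \cite{OhT}/\cite{BBCL}; note that (a), (b) only need $\epsilon_1\neq 0$, which is automatic, so that part is sound.

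However, parts (c) and (d) hinge entirely on the sign relation $\epsilon_1=-\epsilon_3$, and the justification you give for it is a genuine gap. You argue from ``$\Sigma_1\times I$ and $\Sigma_3\times I$ sit at opposite cyclic positions around $\partial V$, so the `inside of $\Sigma_j\times I$' normal reverses relative to the $c$-direction.'' This is not the right mechanism. The co-orientation ``towards $\Sigma_j\times I$'' is the outward normal of $V$ at both $A_1$ and $A_3$, so it does \emph{not} reverse between them; and ``opposite cyclic positions'' would suggest the cancellation is special to $n=4$, whereas the statement must hold for all $n\geq 4$. What the sign actually tracks is how the $I$-directions of the product fibrations of $\Sigma_1\times I$ and $\Sigma_3\times I$ sit relative to each other once one passes through the portion $B_{1,3}$ of $\partial V$ separating $A_1$ from $A_3$: the upper boundary $\partial\Sigma_1\times\{1\}$ and the lower boundary $\partial\Sigma_3\times\{0\}$ are the two ends of $B_{1,3}$, so the $I$-directions of the two annuli point to opposite boundary components of the compact core $C_{1,3}$. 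Carrying this orientation bookkeeping through rigorously is delicate (both the side you view from and the $I$-direction flip, and one has to argue carefully that the net effect is a single sign), and this is precisely the subtlety that the paper's argument avoids by quoting the \emph{topological} fact that $\tilde A_1$ and $\tilde A_3$ are parallel annuli in $S_{1,3}\times I$, so the corresponding annulus Dehn twists already cancel as mapping classes of the cover, before one ever writes down a conjugating element of $\pi_1$. To make your proof complete, you should either (i) perform the sign computation carefully, e.g. by comparing the action of $\tau_{A_1}$ and $\tau_{A_3}$ on the oriented boundary components $\Sigma_1\times\{1\}$ and $\Sigma_3\times\{0\}$ of $\partial M$ and using that both lie on the same side of $B_{1,3}$, or (ii) simply \emph{deduce} the sign from the parallelism statement in the cover, in which case you are essentially reproducing the paper's argument.
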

\begin{proof}
Since (a) follows from (b), we shall first prove (b).
Take a covering $ \tilde M_{1,2} $ of $M$ associated with $\pi_1(S_{1,2})$.
The open 3-manifold $\tilde M_{1,2}$ has a compact core $C_{1,2}$ homeomorphic to $S_{1,2} \times I$ containing homeomorphic lifts of $\Sigma_1 \times I$ and $\Sigma_2 \times I$.
Since convex compact hyperbolic structures are lifted to convex compact hyperbolic structures (see Proposition 7.1 of Morgan \cite{Mo}), 
the sequence $\{m_i\}$ is lifted to a sequence $\{\tilde m_i\}$ in $QH_0(S_{1,2}\times I)\subset AH(\tilde M_{1,2})$.
We shall show that $\{\tilde m_i\}$ does not have a convergent subsequence.

Recall that  $m_i$ is obtained from $m_0$ by the composition of the $i$-time iterations of the right Dehn twists along $A_1$ and $A_3$.
Putting a basepoint $x_0$ on $\Sigma_1\times \{0\}$, we see that every closed loop representing  an element in $\pi_1(S_{1,2} , x_0)$ can be homotoped  off $A_3$ (fixing the basepoint).
On the other hand, the annulus $A_1$ is lifted to an annulus $\tilde A_1$ properly embedded in $C_{1,2}$, and $\tilde m_i$ is obtained from $\tilde m_0$ by the $i$-time iteration of the right Dehn twist along $\tilde A_1$.
Since  $\tilde m_0$ is convex compact,  $\tilde m_0$ corresponds to  a quasi-Fuchsian representation.
By the Bers simultaneous uniformisation, there $\tilde m_0$ is expressed as $qf(g_0, h_0)$, with $(g_0, h_0) \in \teich(S_{1,2}) \times \teich(\bar S_{1,2})$.
Recall that $\hyperbolic/qf(g_0, h_0)$ is homeomorphic to $S_{1,2} \times (0,1)$.
The compact core $C_{1,2}$ is homeomorphic to $S_{1,2} \times [0,1]$, and we can identify
  $S_{1,2}$ and $\bar S_{1,2}$  with $S_{1,2} \times \{0\}$ and $S_{1,2} \times \{1\}$ respectively by homeomorphisms preserving markings, where for the latter the orientation of $\bar S_{1,2}$ is the reverse of the natural one of $S_{1,2} \times \{1\}$.
Let $\gamma, \bar \gamma$ be  simple closed curve on $S_{1,2}$ and $\bar S_{1,2}$ isotopic to $\partial \tilde A_1 \cap S_{1,2} \times \{0\}$ and $\partial A_1 \cap S_{1,2} \times \{1\}$ respectively.
The simple closed curve $\bar \gamma$ corresponds to $\gamma$ on $\bar S$ by the identification above.
By this, $\tilde m_i$ is expressed as $qf(\tau_\gamma^i(g_0), \tau_\gamma^i(h_0))$, where $\tau_\gamma$ denotes the right Dehn twist around $\gamma$.
Then either by considering the geometric limit of the conformal structures at infinity, or just applying  \cite{OhT} or \cite{BBCL}, we see that $\{\tilde m_i\}$ does not have a convergent sequence in $AH(S\times I)$, and hence $\{m_i\}$ does not have a convergent sequence in $AH(M)$.
This completes the proof of (b), and we are also done with (a).

The parts (c) and (d) can be proved by the exact same argument.
Therefore, we shall only prove (c).
As in the case (a), take a covering $\tilde M_{1,3}$ associated with $\pi_1(S_{1,3})$.
Let $\tilde m_i$ be the convex compact hyperbolic structure on $\int \tilde M_{1,3}$ which is  the lift of $m_i$.
In the present case, both of the annuli $A_1$ and $A_3$ are lifted to properly embedded annuli $\tilde A_1$ and $\tilde A_3$ in $\tilde M_{1,3}$.
We can also take a compact core $C_{1,3}$ containing homeomorphic lifts of $\Sigma_1 \times [0,1]$ and $\Sigma_3 \times [0,1]$.
The right Dehn twist along $A_1$ is lifted to the one along $\tilde A_1$, and the left Dehn twist along $A_3$ is lifted to the one along $\tilde A_3$.
Now the annuli $\tilde A_1$ and $\tilde A_3$ are parallel in $C_{1,3}$, and hence the right Dehn twist along $\tilde A_1$ and the left Dehn twist along $\tilde A_3$ cancel out each other.
Therefore we see that $\tilde m_i=\tilde m_0$ for all $i$, which evidently implies that $\{m_i|\pi_1(S_{1,3})\}$ converges up to conjugation.
\end{proof}

Now we show that the sequence in \cref{counter} is a counter-example of the second claim of \cref{Thurston}.

\begin{corollary}
\label{disproof}
The sequence $\{m_i\}$ in \cref{counter} disproves the second sentence of \cref{Thurston}.
\end{corollary}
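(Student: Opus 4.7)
The plan is proof by contradiction. Suppose such a subsurface $x\subset \mathrm{wb}(M,P)$ with incompressible boundary and a subsequence $\{m_{i(j)}\}$ as in the second sentence of \cref{Thurston} exist. Since $\mathrm{wb}(M,P)=\bigsqcup_{i=1}^n\Sigma_i'$, write $x=\bigsqcup_{i=1}^n x_i'$ with $x_i'\subset\Sigma_i'$, so that $X=\bigsqcup_i x_i'\times[0,1]$.

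First, I would enumerate the components of $M\setminus X$. They split into two types: $I$-bundles over components of $\Sigma_i\setminus x_i'$ disjoint from $\partial\Sigma_i$ (whose fundamental groups inject into a single $\pi_1(\Sigma_i)$), and the single ``central'' component $C_V=V\cup\bigcup_i Z_i\times[0,1]$, where $Z_i\subset\Sigma_i$ is the component of $\Sigma_i\setminus x_i'$ containing the annular collar of $\partial\Sigma_i$. In the star amalgam $\pi_1(M)=\pi_1(V)*_{\pi_1(A_1)}\pi_1(\Sigma_1)*_{\pi_1(A_2)}\cdots*_{\pi_1(A_n)}\pi_1(\Sigma_n)$, the subgroup $\pi_1(C_V)$ appears (up to conjugacy) as the analogous amalgam with each $\pi_1(\Sigma_i)$ replaced by $\pi_1(Z_i)$.

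Next, I would use parts (c) and (d) of \cref{counter} to force $x_1'=x_2'=x_3'=x_4'=\emptyset$. Since $\{\rho_{i(j)}\}$ converges on $\pi_1(S_{1,3})$ by (c), this subgroup must lie (up to conjugation) in $\pi_1$ of some component of $M\setminus X$; as it contains copies of both $\pi_1(\Sigma_1)$ and $\pi_1(\Sigma_3)$ sitting in distinct vertex stabilizers of the Bass--Serre tree, it cannot lie in any single-vertex $I$-bundle component, so the ambient component must be $C_V$. Intersecting $\pi_1(S_{1,3})$ with the $\Sigma_1$-vertex stabilizer then gives $\pi_1(\Sigma_1)\subseteq\pi_1(Z_1)$, and since $Z_1$ is incompressible in $\Sigma_1$ (a consequence of $\partial x$ being incompressible in the window base), this forces $Z_1=\Sigma_1$ up to isotopy, i.e.\ $x_1'=\emptyset$. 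The same argument gives $x_3'=\emptyset$, and repeating with $\pi_1(S_{2,4})$ via (d) yields $x_2'=x_4'=\emptyset$.

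The contradiction then follows immediately: $C_V$ now contains $V$, $\Sigma_1\times[0,1]$, and $\Sigma_2\times[0,1]$, so $\pi_1(C_V)$ contains $\pi_1(\Sigma_1)$, $\pi_1(\Sigma_2)$, and the core curve of $V$; the subgroup they generate in $\pi_1(M)$ is precisely $\pi_1(S_{1,2})$. Therefore $\{m_{i(j)}\}$ converges on $\pi_1(S_{1,2})$, contradicting (b). The principal technical obstacle is making the graph-of-groups intersection $\pi_1(C_V)\cap\pi_1(\Sigma_i\times I)=\pi_1(Z_i)$ precise up to conjugacy and confirming that $\pi_1(Z_i)=\pi_1(\Sigma_i)$ forces $Z_i=\Sigma_i$; both rest on the incompressibility of $\partial x$ in $\mathrm{wb}(M,P)$.
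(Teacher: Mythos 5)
Your proof is correct and follows essentially the same strategy as the paper: assume such an incompressible subsurface $x$ exists, use parts (c) and (d) of \cref{counter} to pin $x$ down (forcing the pages $\Sigma_1,\Sigma_2$ to remain attached to $V$ in $M\setminus X$), and then derive a contradiction with (b) because $\pi_1(S_{1,2})$ would have to converge. The paper proceeds slightly differently in presentation: it first observes that convergence on each $\pi_1(\Sigma_j')$ forces $x$ to consist at most of annular collars of the boundaries $\partial\Sigma_j'$, and then rules out the collars near $\partial\Sigma_1'$ and $\partial\Sigma_2'$ using (c) and (d); you instead go directly to $x_1'=x_2'=x_3'=x_4'=\emptyset$ by a Bass--Serre tree argument applied to $\pi_1(S_{1,3})$ and $\pi_1(S_{2,4})$, which is a bit more explicit and also makes clear why both (c) and (d) are needed (the paper cites (c) twice where (d) is evidently intended the second time). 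One small shortcut you could take: $\pi_1(S_{1,3})$ is a closed surface group, hence one-ended and non-free, so it cannot be conjugate into the free fundamental group of any $I$-bundle component of $M\setminus X$; this avoids the need to track vertex stabilizers at that step. Otherwise the arguments agree.
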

\begin{proof}
Recall that $M$ is a primitive book of product $I$-bundles, and its window, which we denote by  $F$, is the union of $\Sigma_1' \times I, \dots , \Sigma'_n \times I$.
We denote the fibring of $F$ as an $I$-bundle by $p\colon F \to B$.
Suppose, seeking a contradiction, that the second sentence of \cref{Thurston} holds.
Since $\{m_i\}$ in \cref{counter} diverges, there must be an incompressible subsurface $x$ of the base surface $B$ with the property of \cref{Thurston} that the restriction of $m_i$ to a subgroup of $\pi_1(M)$ converges if and only if $\Gamma$ corresponds to the fundamental group of a component of $M \setminus X$ for $X=p^{-1}(x)$.

Recall that for each component $\Sigma_j' \times I$ of $F$, the restrictions $m_i|\pi_1(\Sigma_j' \times I)$ converge.
Therefore, $\Sigma'_j \times I$ must lie outside $X=p^{-1}(x)$ for every $j$.
This forces $x$ to be a union of annular neighbourhoods of some of the boundaries $\partial \Sigma'_j\ (j=1, \dots , n)$.
If $x$ contains an annular neighbourhood of  $\partial \Sigma'_1$, then there is no component of $M \setminus X$ corresponding the subgroup $\pi_1(S_{1,3})$, contradicting (c) of \cref{counter}.
Therefore, $x$ cannot contain such an annulus.
In the same way, by (c) of \cref{counter}, $x$ cannot contain an annular neighbourhood of $\partial \Sigma'_2$.
Now, these show that $S_{1,2} \times I$ is isotoped into a component of $M \setminus X$.
This contradicts (b) of \cref{counter}.
Thus we are lead to a contradiction.
\end{proof}

We next show that \cref{counter} gives an example of $\reals$-tree action which is not dual to any codimension-1 annular measured lamination.
We note that this also shows that the second sentence of \cref{Thurston} cannot be remedied just by taking into account annuli embedded in solid torus pairs, replacing the window in the statement to the entire characteristic submanifold.
We recall that in two-dimensional setting, by Skora's theorem, every isometric action of $\pi_1(S)$ on an $\reals$-tree with small stabilisers is dual to a measured lamination.

\begin{corollary}
\label{not Skora}
Let $\{m_i\}$ be a sequence in $AH(M)$ given in \cref{counter}.
Let $\rho \colon \pi_1(M) \to \isom(T)$ be a rescaled limit of $\{m_i\}$, which is a minimal action on an $\reals$-tree $T$ with small edge-stabilisers as in \cref{Morgan-Shalen 1}.
Then there is no codimension-1 annular measured lamination $L$ in $M$ with an equivariant isomorphism from the dual tree $T_L$ to $T$.
\end{corollary}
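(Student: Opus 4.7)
The plan is a proof by contradiction. Suppose a codimension-1 annular measured lamination $L$ in $M$ together with an equivariant isomorphism $T_L \cong T$ exists. By \cref{annular} I may assume $L$ sits inside the characteristic submanifold $V \cup \bigcup_k (\Sigma_k' \times [0,1])$ and is vertical in each $I$-pair. The first task is to show $L \subset V$. For this, each $\pi_1(\Sigma_k) \subset \pi_1(M)$ acts with bounded translation length under every $m_i$: one may arrange $\tau_{A_1}$ and $\tau_{A_3}$ to be supported in collars of $A_1$ and $A_3$ on the $V$-side, so that $(\tau_{A_1}\circ \tau_{A_3}^{-1})_*$ restricts to an inner automorphism of each $\pi_1(\Sigma_k)$. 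Passing to the rescaled limit, every element of $\pi_1(\Sigma_k)$ acts on $T$ with zero translation length, so $\pi_1(\Sigma_k)$ fixes a point. If $L \cap (\Sigma_k' \times I)$ contained an essential vertical annulus $\gamma \times I$, the corresponding splitting of $\pi_1(\Sigma_k)$ would produce hyperbolic elements in $T_L \cong T$, contradicting the fixed point. Hence $L \subset V$.

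Next, parametrise $V$ by a Seifert fibration $\pi\colon V \to D^2$ in which each $A_k$ is vertical, so each $A_k$ projects to an arc $a_k \subset \partial D^2$ and $a_1, \ldots, a_n$ appear in this cyclic order. Since $V$ is primitive, each leaf of $L$ is an essential annulus whose boundary consists of a pair of essential simple closed curves on the $B_k$'s, each winding once around the core of $V$; hence $\pi$ sends $L$ to a disjoint family of proper arcs in $D^2$ with endpoints on $\partial D^2 \setminus \bigcup_k a_k$. Components of $V \setminus L$ correspond bijectively to the (disk) components of $D^2 \setminus \pi(L)$. Denote by $r(k)$ the component containing $a_k$.

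I now read off the constraints on $r(\cdot)$ from \cref{counter}. Since every edge stabiliser in $T_L$ is cyclic while $\pi_1(\Sigma_j)$ is not, each $\pi_1(\Sigma_j)$ fixes a unique vertex in $T_L$, namely the one associated with the component of $M \setminus L$ containing $\Sigma_j \times I$. So boundedness of $\pi_1(S_{1,3}) = \pi_1(\Sigma_1) *_{\langle c\rangle} \pi_1(\Sigma_3)$ from (c) forces $r(1) = r(3)$, and similarly (d) gives $r(2) = r(4)$. Conversely, if $r(1) = r(2)$ held, then $\pi_1(S_{1,2}) = \pi_1(\Sigma_1) *_{\langle c\rangle} \pi_1(\Sigma_2)$ would be contained in the vertex group over that common component and would fix a vertex, contradicting (b); thus $r(1) \neq r(2)$.

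The decisive step is a planarity obstruction. Let $P$ be the disk $r(1) = r(3) \subset D^2$; its boundary is a simple closed curve alternating image-arcs of leaves of $L$ with sub-arcs of $\partial D^2$, two of which contain $a_1$ and $a_3$. Since $r(1) \neq r(2)$, the arc $a_2$ does not lie on $\partial P$; and since $a_1, a_2, a_3, a_4$ occur in this cyclic order on $\partial D^2$, the two $\partial D^2$-arcs of $\partial P$ containing $a_1$ and $a_3$ separate $a_2$ from $a_4$ in $\partial D^2 \setminus \partial P$. Hence $a_2$ and $a_4$ lie in different connected components of $D^2 \setminus \bar P$, and a fortiori in different components of $D^2 \setminus \pi(L)$, giving $r(2) \neq r(4)$ and contradicting the previous step. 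The main obstacle I anticipate is handling the case in which $L$ is a genuine lamination rather than a finite disjoint union of annuli, but the partition $r(\cdot)$ is determined by the finitely many leaves whose image-arcs in $D^2$ actually separate some of the $a_k$'s, so the planar combinatorial argument reduces to the finite case.
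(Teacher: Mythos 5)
Your overall plan—pushing $L$ into $V$, projecting to the Seifert base disc, and running a planar combinatorial argument on which complementary regions contain the arcs $a_k$—is exactly the route the paper takes, and the steps establishing $L\subset V$, $r(1)=r(3)$ (from (c)) and $r(2)=r(4)$ (from (d)) are correct and argued in essentially the same way.

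However, the step deriving $r(1)\neq r(2)$ from (b) has a genuine gap. You argue that if $r(1)=r(2)$ then $\pi_1(S_{1,2})$ fixes a vertex of $T_L\cong T$, and that this contradicts the divergence of $m_i|\pi_1(S_{1,2})$. But the implication you need is the converse of the one that is actually available: boundedness of translation lengths of $m_i(\gamma)$ implies $\gamma$ acts with zero translation length on the rescaled limit $T$, whereas a subgroup fixing a point of $T$ only tells you that its translation lengths grow sub-linearly compared with the rescaling constants $\mu_i$; it does not tell you the restricted representations converge. So fixing a point of $T$ does not contradict (b) without further argument, and $r(1)\neq r(2)$ is not justified. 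The paper sidesteps this entirely: it never uses (b) at this stage. Instead, it extends (d) to $S_{2,j}$ for $j=5,\dots,n$ (observing that the same proof of (d) applies since $S_{2,j}$ can be homotoped off $A_1\cup A_3$), obtaining $r(2)=r(4)=\cdots=r(n)$ in addition to $r(1)=r(3)$; the planar argument then forces all $a_k$ to lie in one complementary region of $D\setminus\pi(L)$, so that every arc of $\pi(L\cap V)$ cuts off a subdisc containing no $a_k$ and is therefore inessential, giving $L\cap V=\emptyset$ and hence $L=\emptyset$. Since $\{m_i\}$ diverges, $T$ is non-trivial and cannot be dual to the empty lamination—this is the contradiction. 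Your planarity lemma (that $r(1)=r(3)$ and $r(2)=r(4)$ with the cyclic order force either all four equal or a Jordan-curve obstruction) is correct, so replacing your appeal to (b) with the paper's extension to all $j$ and closing with the non-triviality of $T$ repairs the proof.
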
 
\begin{proof}
Suppose, seeking a contradiction, that there is such a codimension-1 measured lamination $L$.
By \cref{annular}, $L$ can be assumed to lie in the characteristic submanifold $W$ of $M$.
Since the restrictions of $m_i$ to $\pi_1(\Sigma_j)$ converges for every $j=1, \dots , n$, we see that $L$ must be disjoint from these pages of the book, hence must be contained in the solid torus $V$.
By (c) of \cref{counter},  $S_{1,3} \times I$ can be homotoped so that it becomes disjoint from $L$.
This implies that core curves of the annuli $A_1$ and $A_3$ are homotopic in $M \setminus L$.
In the same way, by (d) of \cref{counter},  core curves of $A_2$ and $A_4$ are homotopic in $M \setminus L$.
In the proof of (d), we only used the fact that $m_i$ is defined  by  the $i$-time iterations of Dehn twists along $A_1$ and $A_3$, and that $S_{2,4}$ can be homotoped off from them.
Therefore, the same argument works  also for $S_{2,j}$ with $j=5, \dots , n$.
This implies that $L \cap V$ must be empty.
Since $\{m_i\}$ diverges, $L$ itself cannot be empty.
This is a contradiction.
\end{proof}
% \begin{figure}
% \psfrag{a}{$A_1^-$}
% \psfrag{b}{$A_1^+$}
% \psfrag{c}{$A_2^-$}
% \psfrag{d}{$A_2^+$}
% \psfrag{e}{$A_3^-$}
% \psfrag{f}{$A_3^+$}
% \psfrag{g}{$C'_8$}
% \psfrag{l}{$C'_7$}
% \psfrag{h}{$C'_9$}
% \psfrag{i}{$C'_{10}$}
% \psfrag{j}{$C'_{11}$}
% \psfrag{k}{$C'_{12}$}
%\resizebox{.5\textwidth}{!}{\includegraphics{annuli-in-V.eps}}
%\caption{The case when $u=6$}
% \label{annuli in V}
%\end{figure}
% 

\section{A weaker version of Thurston's theorem}
\label{rectification}
As can be seen in the proof of \cref{disproof}, the existence of a solid torus pair in $M$  is the cause of the problem.
On the other hand, such a solid torus gives rise to a homotopy equivalence by shuffling.
In this section, we shall show by moving from $M$ to a homotopy equivalent manifold, and taking into account not only the window but also solid torus pairs and thickened torus pairs of the characteristic submanifold, we can prove a weak version of Thurston's original theorem, stated as \cref{rectified Thurston}. 
Before starting to prove this theorem, we prove the following theorem showing that a minimal isometric action of $\pi_1(M)$ with small edge stabilisers on an $\reals$-tree is isomorphic to an action on an $\reals$-tree dual to an annular measured lamination in a 3-manifold homotopy equivalent to $M$.
This should be contrasted with \cref{not Skora}.

\begin{theorem}
\label{dual annular}
Let $(M,P)$ be a pared manifold as given in \cref{Thurston}.
Let $\rho \colon \pi_1(M) \to \isom(T)$ be a minimal isometric action on an $\reals$-tree $T$ with small edge stabilisers such that every element in $\pi_1(M)$ conjugate into the fundamental group of a component of $P$ is mapped to the identity by $\rho$.
Then there is a codimension-1 annular measured lamination $L'$, with weighted annuli with singular axes allowed, in a pared manifold $(M',P')$ obtained from $(M,P)$ by shuffling, and an equivariant isomorphism from the dual tree $T_{L'}$ to $T$.
\end{theorem}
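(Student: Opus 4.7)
The plan is to start from the annular codimension-1 measured lamination $L \subset M$ and the equivariant morphism $t\colon T_L \to T$ furnished by \cref{dominating lamination}, and then modify $L$ inside $M$ and shuffle $M$ itself until the morphism becomes an isomorphism. By \cref{annular} we may assume that $L$ lies in the characteristic submanifold $W$ of $M$ and is vertical in every $I$-pair; the hypothesis that $\rho$ kills every $\pi_1(P_0)$ allows $L$ to be taken disjoint from $P$ as in \cref{dominating lamination}.

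First I would remove folds of $t$ inside the $I$-pair portions of $W$. For each $I$-pair $W_0$ with $I$-bundle projection $p_0\colon W_0 \to B_0$, the restriction of $\rho$ to $\pi_1(W_0)=\pi_1(B_0)$ is a minimal isometric action of a (possibly bordered) surface group on its minimal invariant subtree, and peripheral elements of $\pi_1(B_0)$ corresponding to boundary components of $B_0$ that meet $P$ act trivially. By \cref{Skora}, this action is dual to a measured geodesic lamination $\lambda_0$ on $B_0$. Replacing $L \cap W_0$ by $p_0^{-1}(\lambda_0)$ yields a vertical lamination whose dual tree maps isometrically onto the subtree spanned by $\rho(\pi_1(W_0))$, so $t$ has no folds inside $W_0$.

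The remaining folds of $t$ must live in the solid torus pairs and thickened torus pairs of $W$. Let $V$ be a solid torus pair with Seifert fibration $\pi_V\colon V \to D$ of cone order $p\geq 1$ and cone point $\mathfrak c$, and let $A_1,\dots, A_k$ be the boundary annuli of $V$ arranged in cyclic order along $\partial V$, projecting to disjoint arcs in $\partial D$. A disjoint collection of essential vertical annuli in $V$ corresponds to a family of pairwise non-crossing chords in $D$ joining these arcs, while a weighted annulus with singular axis corresponds to a family of chords meeting only at $\mathfrak c$. The data carried by $t$ at $V$ consists of a matching of the ends of components of $L$ entering $V$ through $A_1,\dots,A_k$, together with a branching pattern at the image of $\pi_V(\mathfrak c)$; this data is realisable by a (possibly singular) vertical lamination precisely when the matching is compatible with some cyclic arrangement of $A_1,\dots,A_k$. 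Because shuffling $V$ acts by arbitrary cyclic permutations of $A_1,\dots,A_k$, finitely many shufflings bring the arrangement into compatibility, and I then replace $L\cap V$ by the corresponding vertical lamination, including weighted annuli with singular axes whenever $t$ branches at $\pi_V(\mathfrak c)$. Thickened torus pairs are treated in the same way.

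Performing these modifications on every solid torus pair and thickened torus pair of $W$, and matching them with the vertical laminations obtained from Skora's theorem in the neighbouring $I$-pairs, produces a pared manifold $(M',P')$ obtained from $(M,P)$ by shuffling, together with an annular codimension-1 measured lamination $L'\subset M'$ (possibly with weighted annuli with singular axes) whose dual tree morphism to $T$ has no folds; by the minimality of $\rho$ this morphism is then an equivariant isomorphism. The main obstacle I expect is the compatibility problem at the interfaces: a shuffling of one solid torus pair alters its gluing with neighbouring $I$-pairs and may force the vertical laminations chosen there to be modified, a priori propagating mismatches indefinitely. One must argue that a single globally consistent choice exists, for instance by propagating the combinatorial data carried by $t$ along the dual graph of the JSJ-decomposition of $M$ starting from a distinguished hyperbolic piece, and verify that the procedure terminates in finitely many shufflings.
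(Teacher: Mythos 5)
Your outline matches the paper's proof: start from the annular lamination $L$ and morphism $T_L\to T$ of \cref{dominating lamination}, use \cref{Skora} in each $I$-pair to kill the folds there, then handle the solid torus and thickened torus pairs combinatorially via their Seifert base discs/annuli and shuffle to realise the right cyclic order, allowing annuli with singular axes.

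The concern you raise at the end about mismatches propagating along the JSJ graph is not actually an issue, and the way the paper organises the step at a solid or thickened torus pair $V$ shows why. Rather than trying to match ``ends of $L$ entering $V$'' abstractly, the paper first passes to the subtree $T_V\subset\hat T$ lying over $V$ and removes folds in $T_V$, equivariantly with respect to the $\integers_p$ (resp.\ $\integers$) action coming from the core of $V$, so that $\hat f|T_V$ becomes an embedding with image $T'_V\subset T$. The finite quotient $\bar T_V=T'_V/\integers_p$ (resp.\ $T'_V/\integers$) is then realised as a planar tree in the base disc $D$ (with cone point $\mathfrak c$) or base annulus, with its numbered leaves placed on $\partial D$ according to the components of $\Fr V$, and an edge through $\mathfrak c$ giving an annulus with singular axis. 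The shuffle on $V$ is exactly the one that makes the cyclic order of $\Fr V$ on $\partial V$ agree with the cyclic order of the corresponding leaves of this embedded $\bar T_V$. Because $T'_V$ is a genuine subtree of $T$, and because the $I$-pair laminations produced by Skora's theorem are determined intrinsically by the restricted actions on $T$ and are unaffected by how the annuli of $\Fr V$ are subsequently permuted on $\partial V$, the choices made at different solid/thickened torus pairs are independent and automatically compatible. So no propagation or termination argument along the dual graph is needed; you would close the gap in your write-up by replacing the vague ``matching of ends'' with this explicit fold-removal in $T_V$ and planar realisation of $\bar T_V$.
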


\begin{proof}
By \cref{dominating lamination}, there is  a  codimension-1 annular measured lamination $L$ in $M$  such that there is a $\pi_1(M)$-equivariant morphism $f$ from the dual tree $T_{L}$ to $T$.
%Let $\tilde M$ be the universal cover of $M$ and $\tilde L$ the lift of $L$ to $\tilde M$.
%Let $T_L$ be an $\reals$-tree $T_L$ dual to $(\tilde M, \tilde L)$.
%Let $T_L$ be an $\reals$-tree dual to $L$.
%Then there is a $\pi_1(M)$-equivariant morphism $f \colon T_L \to T$.
For any component $P_0$ of $P$, an element in $\pi_1(P_0)$ acts trivially on $T$.
Therefore we can choose  $L$  which is disjoint from $P$ as stated in \cref{dominating lamination}.

Let $W_0$ be an $I$-pair in the characteristic submanifold $W$ which contains a component of $L$.
Then $W_0$ is an $I$-bundle over a surface $\Sigma$.
We denote this fibration by $p \colon W_0 \to \Sigma$.
%Since it admits a component of $L$, we see that $\chi(\Sigma) \leq 2$ if $\Sigma$ is orientable and $\chi(\Sigma) \leq 1$ if $\Sigma$ is non-orientable.
By \cref{annular}, $L_0:=L \cap W_0$ can be assumed to be vertical with respect to $p$.
Let $\tilde W_0$ be a component of the preimage of $W_0$ in $\tilde M$ which is invariant under $\pi_1(W_0)\subset \pi_1(M)$.
Let $T^{W_0}$ be the sub-tree of $T$ corresponding to the rescaled limit of $\tilde W_0$, and $T_L^{W_0}$ the subtree of $T_L$ corresponding to $\tilde W_0$.
Then the restriction $f|T_L^{W_0}$ is a $\pi_1(W_0)$-equivariant morphism.
Since $\pi_1(W_0)$ is isomorphic to $\pi_1(\Sigma)$, and an element of $\pi_1(\Sigma)$ corresponding to a boundary component of $\Sigma$ acts on $T$ trivially, we can apply \cref{Skora}, and see that $T^{W_0}$ is $\pi_1(W_0)$-equivariantly isometric to an $\reals$-tree  $T^{'W_0}$ which is dual to an annular codimension-1 measured lamination $L^{W_0}$ in $W_0$.
We replace $L\cap W_0$ with $L^{W_0}$ and hence $T_L^{W_0}$ with $T^{'W_0}$.
Then $f|T^{W_0}_L$  can be replaced with the equivariant isometry between $T^{'W_0}$ and $T^{W_0}$.
At the same time, every image of $T_L^{W_0}$ under the action of $\pi_1(M)$ and the restriction of $f$ there are modified.
Thus we get an $\reals$-tree $T'$ and a morphism $f' \colon T' \to T$ which is isometry on the orbit of the subtree corresponding to $\tilde W_0$ which is dual to $L^{W_0}$.
Repeating this modification for each $I$-pair of $W$ intersecting $L$, we get an $\reals$-tree $\hat T$ and a morphism $\hat f \colon \hat T \to T$ with an annular codimension-1 measured lamination $\hat L$ to which $\hat f$ is dual.
The morphism $\hat f$ is isometric outside the part corresponding to the preimage of the solid torus pairs and thickened torus pairs in $W$.
% and each subtree of $\hat T$ corresponding to the preimage of an $I$-pair $W_0$ in $W$ is dual to $\hat L \cap W_0$.

%By replacing each $g T_{W_0}$ for $g \in \pi_1(M)$ by  a copy $T'_{W_0}$ of $T'_{W_0}$ on which $g \pi_1(W_0) g^{-1}$ acts in the same way as $\pi_1(W_0)$, we get an $\reals$-tree $T'$
Now let $V$ be either a solid torus pair or a thickened torus pair in $W$.
Since $\hat L$ is annular, by isotopying it, we can assume that $V \cap \hat L$ consists of properly embedded incompressible annuli whose boundaries lie in $V \cap (\partial M \setminus P)$, and that no two components of $V \cap \hat L$ are parallel as embeddings in  $(V, V \cap (\partial M \setminus P))$.
In the case when $V$ is a solid torus, we can also assume that it is disjoint from a core curve of $V$.
By isotoping $V \cap \hat L$, we can regard $V$ as a Seifert fibred manifold with base surface $D$, which is a disc with singular point $\mathfrak c$ of order $p$ when $V$ is a solid torus and annulus when $V$ is a thickened torus in such a way that these annuli $V\cap \hat L$ are all vertical with respect to the fibration $\pi \colon V \to D$.
We denote the union of arcs on $\partial D$ which is the image of $p(V \cap (\partial M \setminus P))$ by $A$

The subtree $T_V$ of $\hat T$ corresponding to a component of the preimage of $V$ in $\tilde M$ which is invariant under $\pi_1(V)$ is a simplicial tree each of whose edges is dual to a component of $V \cap \hat L$ and has length equal to the weight given to that component.
We note that  the action of the group $\integers$ corresponding to a regular fibre is trivial.
In the case when $V$ is a solid torus pair, and $p > 1$, the cyclic group $\integers_p$ acts on $T_V$ by isometries, whereas in the case when $V$ is a thickened torus, the infinite cyclic subgroup $\integers$  acts on $T_V$ by isometries.
Both of them are quotients of $\pi_1(V)$ by a normal subgroup generated by a regular fibre.
In the latter case, the stabiliser of the group $\integers$ consists of one point since there is no leaf of $\hat L$ which is a torus.
We modify $T_V$ and $\hat f|T_V$, by removing folds as in the proof of Skora's theorem \cite{Sk}, and make $\hat f|T_V$ an embedding to $T$, whose image we denote by $T'_V$.
We can do this equivariantly with respect to the action of either $\integers_p$ or $\integers$.
Then $T'_V$ is a subtree of $T$ on which either $\integers_p$ or $\integers$ acts by isometries.
We consider the quotient $\bar T_V$ of $T'_V$ by this isometric action of $\integers_p$ or $\integers$, which is a finite tree.

Each of the endpoints of $\bar T_V$, \ie each vertex of valency $1$, corresponds to a component (or components) of $\Fr V$.
Conversely, each component of $\Fr V$ corresponds to a vertex of $\bar T_V$, but not necessarily an endpoint.
We give numbers to the components of $\Fr V$ as $A_1, \dots , A_n$ so that the subscripts coincide with the cyclic order of these components on $\partial V$.
Correspondingly, we number the associated vertices of $\bar T_V$.
Note that it is possible that more than one component of $\partial V$ correspond to the same vertex, and hence the vertex has more than one number.
We also note that in the case when $V$ is a thickened torus pair, there is a vertex $v_P$ in $\bar T_V$ corresponding to the torus component of $V \cap \partial M$ which we denote by $P_0$.
In the case when $V$ is a solid torus and $p>1$, there is a subtree $t_c$ of $\bar T_V$ which is the quotient of the stabiliser of the $\integers_p$ action.
%This corresponds to the component of $V \setminus \hat L$ containing the core curve of $V$.

%Now, if there is a component of $V \cap \hat L$ whose corresponding edge in $T'_V$ degenerates to a point, we can  remove it  from $\hat L$ without changing  $T'_V$.
%There is one more situation of which we need a special care.
%Suppose that there are two (weighted) annuli $A_1$ and $A_2$  which bound a region containing a core curve of $V$, and that the regionn 
%\begin{comments}
%The case when $v_c$ is shared by two regions.
%\end{comments}

We realise $\bar T_V$ in the base surface $D$ so that the vertices  numbered lie on $\partial D$.
In the case when $V$ is a solid torus pair with $p>1$, we collapse $t_c$ into $\mathfrak c$.
In the case when $V$ is a thickened torus pair, we assume that  $v_P$ lies on $P_0$.
%In the case when $V$ is a solid torus pair with $p=1$, the realisation is an embedding without any extra conditions.
Then it can be regarded as a tree dual to a partition of a disc (possibly with a cone point) or an annululs $D$ by disjoint weighted simple arcs $\alpha$ with endpoints at $A \cup \{\mathfrak c\}$, that is,  each region of the complement corresponds to a vertex, and each arc corresponds to an edge and has weight equal to the length of the edge.
If an edge has a numbered vertex, the dual arc should cut off components of $A$ which are the images of  components of $V \cap (\partial M \setminus P)$ having the numbers given to the vertices.
(See Figure \ref{tree}.)
In the case when $t_c$ contains an edge, each edge is dual to a weighted simple proper arc passing through $\mathfrak c$, and hence corresponds to a vertical annulus containing the singular axis at the centre.
Then the union of such weighted arcs is realised as a union of weighted arcs connecting $\partial D$ to $\mathfrak c$ intersecting only at $\mathfrak c$.
%There is one case for which we need a special care in the case when $V$ is a solid torus pair and $p > 1$.
%Let $\gamma$ be the core curve of $V$ which is the fibre over $c$.
%Suppose that $v_c$ is contained in the interior of a simple arc in $T'_V$ fixed by $\gamma$.
%Then 

 \begin{figure}
 \psfrag{a}{$v_1=v_3$}
 \psfrag{b}{$v_4$}
 \psfrag{c}{$v_P$}
 \psfrag{d}{$v_6$}
 \psfrag{e}{$v_4$}
 \psfrag{f}{$v_2$}
 \psfrag{g}{$A_3$}
 \psfrag{h}{$A_1$}
 \psfrag{i}{$A_4$}
 \psfrag{j}{$P_0$}
 \psfrag{k}{$A_6$}
 \psfrag{l}{$A_4$}
 \psfrag{m}{$A_2$}
\resizebox{\textwidth}{!}{\includegraphics{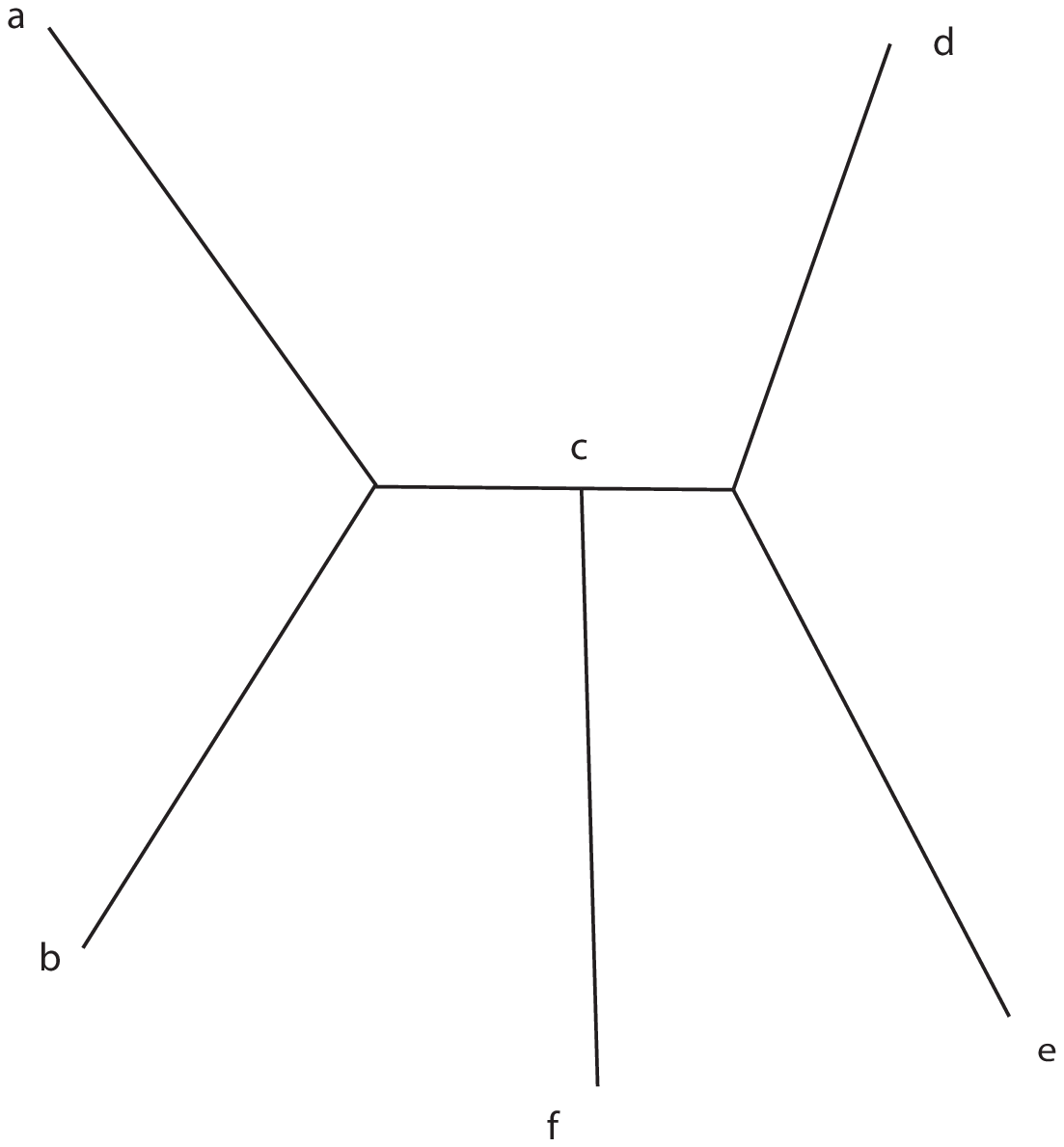} \hspace{2cm}
\includegraphics{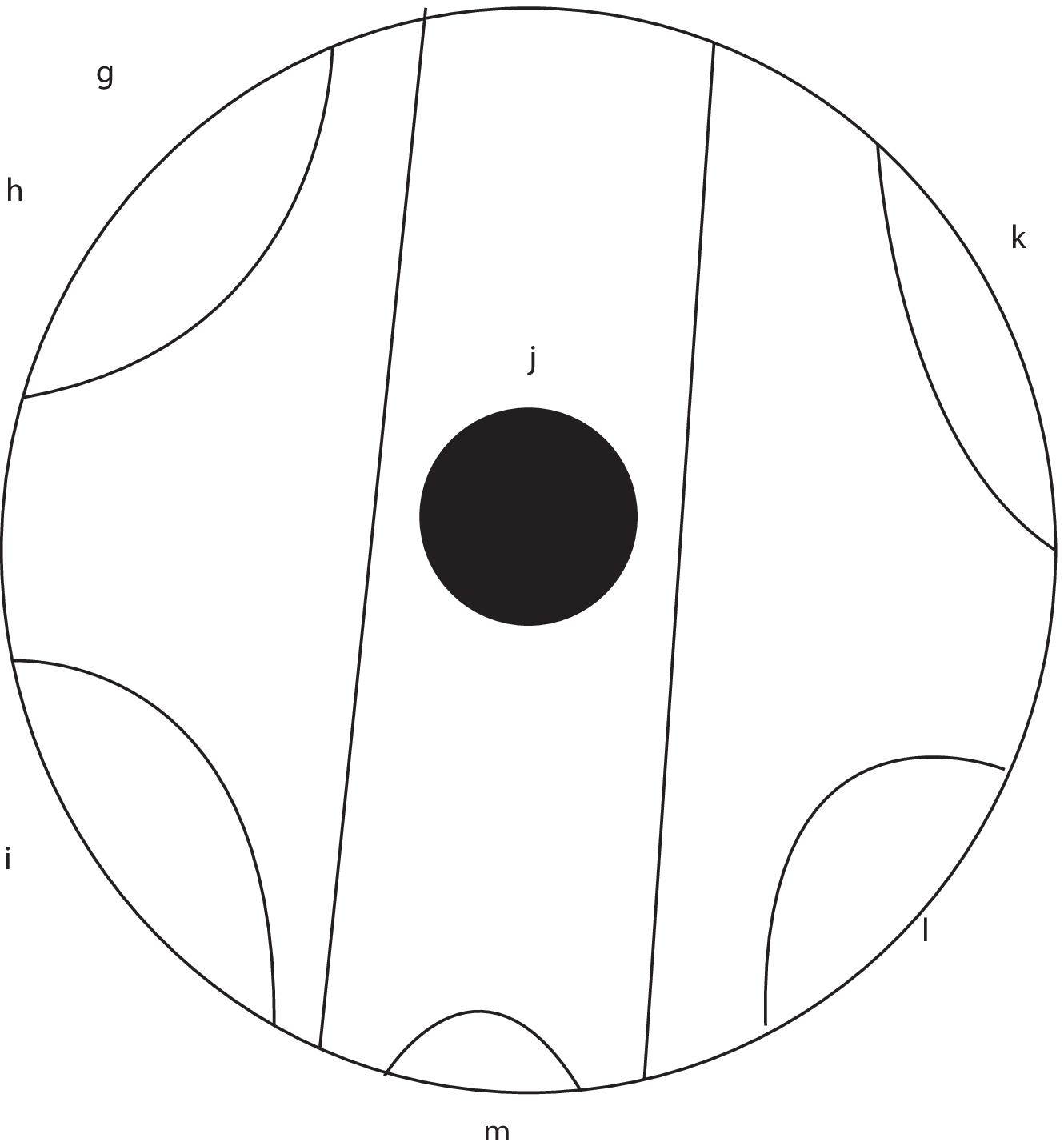}}
\caption{The tree $T'_V$ and its dual disc when $V$ is a thickened torus.}
 \label{tree}
\end{figure}
% 

%\begin{comments}
%Describe the annulus should go around $p$ times around the axis.
%Make use of singular fibration as in a Seifert fibred space.
%\end{comments}

We perform shuffling around $V$ so that the order of the components of $\Fr V$ coincides with the order of the corresponding vertices of $\bar T_V$ on $D$.
Then we define a new annular codimension-1 lamination in $V$ to be the union of vertical annuli (possibly with singular axes) over the weighted union of arcs $\alpha$.
%, keeping $\hat L \cap V$ to be vertical with respect to the fibration $\pi \colon V \to D$.
%so that it corresponds to $\alpha \times D^2$ when $V$ is a solid torus pair, and to $\alpha \times S^1 \times I$ when $V$ is a thickened torus pair in such a way that the region containing $P_0$ corresponds to the region of $v_P$.
We note that if one vertex of $\bar T_V$  has more than one number, it does not matter how to perform shuffling among them.
Repeating this operation for every torus pair and thickened torus pair intersecting $\hat L$, we have a pared manifold $(M',P')$ obtained from $M$ by shuffling without modifying the paring locus $P$, and an annular codimension-1 measured lamination $L'$ (possibly containing annuli with singular axes) in $M'$ such that there is a $\pi_1(M')=\pi_1(M)$-equivariant isometry from the dual tree $T_{L'}$ of $L'$ to $T$.
\end{proof}

To derive \cref{rectified Thurston} from \cref{dual annular}, we need some facts relating three-dimensional hyperbolic geometry and two-dimensional one.
We first prove the following  rather elementary fact about degeneration of hyperbolic structures on surfaces.
This can be regarded as a two-dimensional analogue of the problematic second sentence of \cref{Thurston}.
\begin{lemma}
\label{surviving part}
Let $S$ be an orientable surface of hyperbolic type, and let $\{m_i\}$ be a sequence in $\teich(S)$.
Then, passing to a subsequence,  there is a possibly disconnected subsurface $S'$ of $S$ such that $\{m_i|S'\}$ converges in $\teich(S')$ and such that $S'$ is maximal up to isotopy among the subsurfaces with the same property.
\end{lemma}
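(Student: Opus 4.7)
The plan is to extract a subsequence along which all simple closed curve length functions converge on $S$, to identify a multicurve $C$ whose complement carries the ``bounded'' part of the degeneration, and to apply precompactness in Teichm\"uller space on that complement.

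Since there are countably many isotopy classes of essential simple closed curves on $S$, a diagonal argument produces a subsequence, still denoted $\{m_i\}$, such that $\ell_{m_i}(\gamma) \to L(\gamma) \in [0,\infty]$ exists for every essential simple closed curve $\gamma$. I would call a multicurve $C$ on $S$ \emph{admissible} if every essential simple closed curve on $S$ that is disjoint from $C$ and not isotopic into $C$ satisfies $L(\gamma) \in (0,\infty)$. Any pants decomposition is admissible vacuously (a pair of pants has no essential simple closed curves), and multicurves on $S$ have bounded complexity, so one can choose a minimal admissible multicurve $C$ with respect to inclusion up to isotopy. Set $S' = S \setminus N(C)$, where $N(C)$ is an open regular neighbourhood.

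By the admissibility of $C$, on each component of $S'$ every essential simple closed curve has limit length in $(0,\infty)$, while boundary lengths of $S'$ equal the values $L(\gamma)$ for components $\gamma$ of $C$ (and are allowed to vanish in the limit, producing cusps). Standard precompactness of Teichm\"uller space, via the properness of the length function or equivalently via Mumford--Bers compactness together with convergence of the Fenchel--Nielsen coordinates along a pants decomposition of $S'$, yields a further subsequence along which $\{m_i|S'\}$ converges in $\teich(S')$.

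For the maximality, suppose $S''$ contains $S'$ strictly up to isotopy. Then $S''$ contains in its interior some component $\alpha$ of $C$. Minimality of $C$ forces $C \setminus \{\alpha\}$ to be non-admissible, so there exists an essential simple closed curve $\beta$ on $S$ disjoint from $C \setminus \{\alpha\}$ and not isotopic into $C \setminus \{\alpha\}$ with $L(\beta) \notin (0,\infty)$; this $\beta$ is either isotopic to $\alpha$ (forcing $L(\alpha) \in \{0,\infty\}$) or crosses $\alpha$ while being disjoint from the rest of $C$. In either case, $\beta$ lies in $S' \cup N(\alpha) \subset S''$ and is essential there, and $\ell_{m_i}(\beta)$ fails to converge to a positive finite limit, so $\{m_i|S''\}$ cannot converge in $\teich(S'')$. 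The main obstacle is the Dehn-twist type degeneration: when $\alpha \subset C$ has $L(\alpha) \in (0,\infty)$, the curve $\alpha$ must nevertheless be cut out because the Fenchel--Nielsen twist parameter along $\alpha$ diverges, witnessed by the existence of a crossing curve $\beta$ with $L(\beta) = \infty$. Minimal admissibility of $C$ is designed precisely so that every such $\alpha$ comes equipped with such a witness, which is what drives the maximality argument.
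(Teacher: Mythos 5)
Your proof is correct, and it reaches the lemma by a genuinely different route from the paper's. The paper works with a sequence of shortest pants decompositions $\alpha_i$ of $(S,m_i)$: after passing to a subsequence along which a sub-multicurve $\beta\subset\alpha_i$ stabilizes, it takes for a candidate surface the union of $\beta$ with the pairs of pants bounded entirely by $\beta$, and then deletes the components $b$ of $\beta$ along which either $\len_{m_i}(b)\to 0$ or the Fenchel--Nielsen twist about $b$ is unbounded; convergence on what remains is read off coordinate by coordinate. You instead diagonalize so that every simple closed curve has a limit length $L(\gamma)\in[0,\infty]$, define admissibility of a multicurve purely in terms of these limits, cut along a minimal admissible multicurve $C$, and obtain convergence of $\{m_i|S'\}$ on the complement from the properness of the length-function embedding of Teichm\"uller space rather than from explicit Fenchel--Nielsen coordinates. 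The paper's version makes the two sources of degeneration (pinching and unbounded twisting) visible directly at the cost of invoking Bers's bound on shortest pants decompositions; yours is coordinate-free and makes the maximality argument more transparent, since minimality of $C$ attaches to each deleted curve $\alpha$ an explicit witness $\beta$ with $L(\beta)\in\{0,\infty\}$ (in the diverging-twist case the witness is a transversal with $L(\beta)=\infty$), whereas the paper dispatches maximality in one sentence. One small point to tighten: when you say $\beta$ is either isotopic to $\alpha$ or crosses $\alpha$, the remaining a priori case, that $\beta$ is disjoint from all of $C$ and not isotopic into $C$, should be explicitly ruled out by admissibility of $C$ itself. Both proofs also quietly use the convention that $\teich(S')$ tolerates boundary curves whose lengths tend to $0$, i.e.\ geodesic boundary degenerating to cusps; this is the interpretation the application in \cref{diverge/converge} requires.
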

\begin{proof}
Let $\alpha_i$  be a sequence of shortest pants decomposition of $(S, m_i)$, each of which is regarded as a collection of simple closed geodesics.
Passing to a subsequence, we can assume that there is a sub-collection $\beta_i$ of $\alpha_i$ which is constant, and no component of $\alpha_i \setminus \beta_i$ has a constant subsequence.
Since $\beta_i$ is constant, we denote it by $\beta$.
We take a union of $\beta$ and all pairs of pants in $S\setminus \alpha_i$ all of whose boundary components belong to $\beta$, and denote it by $\Sigma$.

Let $\Sigma_0$ be a component of $\Sigma$, and denote $\beta\cap \Sigma$ by $\beta_0$.
For a component $b$ of $\beta_0$, by passing to a subsequence, we can assume that either $\len_{m_i}(b) \to 0$ or $\len_{m_i}(b)$ converges to a positive constant.
In the first case, we cut $\Sigma_0$ along $b$.
In the second case, passing to a subsequence again, the twisting parameter (with respect to the Fenchel-Nielsen coordinate) around $b$ can be assumed to converge or goes to $\pm \infty$.
In the latter case, we cut $\Sigma_0$ along $b$.
Repeating this for all components of $\beta_0$, we get a subsurface $\Sigma_1$ of $\Sigma_0$, and $m_i|\Sigma_1$ converges in $\teich(\Sigma_1)$.
Taking the union of such $\Sigma_1$ for all components $\Sigma_0$ of $\Sigma$, we obtain $S'$. 
The maximality follows from our construction easily.
\end{proof}

Let  $(S,m)$ be a hyperbolic surface of finite area and $M$ be a complete hyperbolic 3-manifold.
A pleated surface $f \colon S \to M$ is a continuous map such that for each point $x$ on $S$, there is at least one segment $s$ containing $x$ at its interior such that $f|s$ is totally geodesic.
The set consisting of points where there is exactly one germ for such segments is called the pleating locus of $f$, and it constitutes a geodesic lamination on $(S, m)$.
The following is a consequence of \lq efficiency of pleated surfaces' proved in Theorems 3.3 and 3.4 in \cite{Th2} combined with \cref{surviving part}.

\begin{proposition}
\label{diverge/converge}
Let $(M,P)$ be a pared manifold, and let $\rho_i \in AH(M, P)$ be a sequence.
Then for every component $S$ of $\partial M \setminus P$, there is a possibly disconnected subsurface $S'$ such that $\{\rho_i|\pi_1(S')\}$ converges up to conjugacy, and $S'$ is maximal up to isotopy among the subsurfaces of $S$ with this property.
If $S'$ intersects essentially a component $T$ of $\partial M \setminus W$ for the characteristic submanifold $W$, then $S'$ contains the entire $T$ (up to isotopy).
\end{proposition}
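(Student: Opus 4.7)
The plan is to transfer the statement to a two-dimensional problem using pleated surfaces, apply \cref{surviving part}, and then push back to three dimensions using Thurston's efficiency of pleated surfaces. First I would fix a maximal geodesic lamination $\mu$ on $S$ (obtained, say, by completing a pants decomposition) and, for each $i$, realise $S$ as a pleated surface $f_i \colon (S, m_i) \to N_i := \hyperbolic/\rho_i(\pi_1(M))$ with pleating locus supported on $\mu$; this is legitimate because $S$ is incompressible in $M$ and the representations in $AH(M,P)$ send peripheral classes to parabolic elements. Writing $m_i \in \teich(S)$ for the intrinsic hyperbolic metric induced by the pleating, I would then apply \cref{surviving part} to $\{m_i\}$ to extract, after passing to a subsequence, a maximal (possibly disconnected) incompressible subsurface $S' \subset S$ on which $m_i|S'$ converges in $\teich(S')$.

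Next I would upgrade convergence of $m_i|S'$ in Teichm\"uller space to convergence of $\rho_i|\pi_1(S')$ up to conjugacy via the efficiency of pleated surfaces (Theorems 3.3 and 3.4 of \cite{Th2}). The bounded Fenchel--Nielsen coordinates of $m_i|S'$ yield a finite generating set of $\pi_1(S')$ represented by simple closed curves whose $m_i$-lengths are bounded uniformly in $i$; efficiency then bounds the corresponding translation lengths $\trlen(\rho_i(g))$, and a standard compactness argument (see, e.g., \cite{Kap}) produces a further subsequence of $\rho_i|\pi_1(S')$ converging up to conjugacy. For the three-dimensional maximality, suppose some subsurface $\Sigma$ strictly larger than $S'$ (up to isotopy) were to carry a convergent subsequence of $\rho_i|\pi_1(\Sigma)$. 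Then the pleated-surface metrics $m_i|\Sigma$, still constructed with pleating locus in $\mu$, must have Fenchel--Nielsen parameters bounded both above and below by the two-sided efficiency estimate combined with the convergence of the representations (the lower bounds coming from the fact that new cusps can only occur along $P$), contradicting the maximality guaranteed by \cref{surviving part}.

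For the final clause, suppose $S'$ intersects essentially a component $T$ of $\partial M \setminus W$. The (valid) first sentence of \cref{Thurston} yields that the restriction of $\rho_i$ to the fundamental group of the component of $M \setminus \wind(M,P)$ adjacent to $T$ is bounded up to conjugacy, so in particular $\rho_i|\pi_1(T)$ converges along a further subsequence. Then $\rho_i$ converges up to conjugacy on $\pi_1(S' \cup T)$ (after isotoping $S' \cup T$ to have incompressible boundary), and the maximality established above forces $T \subseteq S'$ up to isotopy. The hard part will be the converse direction in the efficiency step: while the upper bound $\trlen(\rho_i(c)) \leq \len_{m_i}(c)$ is straightforward, deducing that representation convergence forces the pleated-surface metrics to remain in a compact subset of $\teich(\Sigma)$ requires the finer two-sided content of \cite{Th2}, namely that a pleated surface cannot be much longer than the closed geodesic it realises unless the ambient geometry itself degenerates in the thick part.
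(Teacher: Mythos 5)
Your proposal follows the paper's proof essentially verbatim in structure: realize $S$ as a sequence of efficient pleated surfaces $f_i\colon (S,m_i)\to \hyperbolic/\rho_i(\pi_1(M))$ (Thurston \cite{Th2}, Theorem 3.3, applied in the cover of the hyperbolic manifold associated with $\pi_1(S)$), use the two-sided estimate $(*)$ relating $\len_{m_i}$ and translation lengths, feed the induced sequence $m_i\in\teich(S)$ into \cref{surviving part} to extract the maximal subsurface, and then deduce the last clause about components $T\subset\partial M\setminus W$ from the first (valid) sentence of \cref{Thurston}. The paper compresses the middle step to a single sentence, whereas you spell out the translation between Teichm\"uller convergence and representation convergence; this added detail is welcome and is the correct content of the efficiency statement.

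One spot where your phrasing is imprecise is the parenthetical ``the lower bounds coming from the fact that new cusps can only occur along $P$'' in the maximality argument. A sequence $\rho_i$ in $AH(M,P)$ can certainly develop accidental parabolics away from $P$, so there is no a priori lower bound on the $m_i$-lengths of simple closed curves in a subsurface $\Sigma$ on which $\rho_i|\pi_1(\Sigma)$ converges. The correct reason such pinching does not threaten maximality is handled by the construction in \cref{surviving part} itself (pinched curves and wildly twisting curves are cut off so as to land in $\partial S'$), together with the observation that the limit of $\rho_i|\pi_1(\Sigma)$ would then exhibit a new cusp along $\partial S'\cap\Int\Sigma$, so $\Sigma$ and $S'$ differ only by annular neighborhoods of these curves and are not genuinely larger up to isotopy of incompressible subsurfaces. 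Apart from that wrinkle, your argument is the paper's argument.
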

\begin{proof}
Let $f_i \colon (S,m_i) \to \hyperbolic/\rho_i(\pi_1(M))$ be a pleated surface taking each frontier of $S$ to a cusp, which is efficient in the following sense:
\begin{itemize}
\item[(*)]
For any simple closed curve $\gamma$ on $S$, the translation length of $\rho_i(\gamma)$ is bounded if and only  $\{\len_{m_i}(\gamma)\}$ is bounded, where $\len_{m_i}$ denotes the geodesic length.
\end{itemize}
The existence of such a sequence of pleated surfaces was proved in Theorem 3.3 of Thurston \cite{Th2}.
(We need to take a covering associated with $\pi_1(S)$ to apply this result of Thurston.)
Now the first statement of our proposition follows from \cref{surviving part}.
Combining this with the first part of \cref{Thurston}, we obtain the second statement.
\end{proof}

Although this surface $S'$ is determined only up to isotopy, we always assume that no frontier component of $S'$ is isotopic into $P$, and any component $T$ of $\partial M \setminus W$ intersecting $S'$ is contained in $S'$.

We need to refine \cref{diverge/converge} for twisted $I$-pairs in the characteristic submanifold as follows.

\begin{corollary}
\label{twisted}
In the setting of \cref{diverge/converge}, suppose that there is a twisted $I$-pair $(W_0, W_0 \cap \partial M)$ in the characteristic submanifold of $(M,P)$, whose base surface we denote by $\Sigma_0$.
Let $S$ be a  component of $\partial M \setminus P$ containing $W_0 \cap \partial M$.
Then the subsurface $S'$ as obtained in \cref{diverge/converge} doubly covers a subsurface of $\Sigma_0$.
\end{corollary}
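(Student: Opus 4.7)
The plan is to exploit the double-cover structure inherent to any twisted $I$-pair. Since $W_0 \to \Sigma_0$ is a non-trivial $I$-bundle, the associated $\partial I$-bundle $W_0 \cap \partial M$ is connected and the restriction of the bundle projection $p \colon W_0 \cap \partial M \to \Sigma_0$ is a regular double cover; denote its deck involution by $\iota$. The geometric input underpinning the whole argument is that for every essential simple closed curve $\gamma$ on $W_0 \cap \partial M$, the loops $\gamma$ and $\iota(\gamma)$ cobound an embedded annulus in $W_0$ obtained by tracing out the interval fibres of $p$ over $p(\gamma) \subset \Sigma_0$, and so are freely homotopic in $W_0$. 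It follows that $[\gamma]$ and $[\iota(\gamma)]$ are conjugate in $\pi_1(M)$ and therefore $\trlen(\rho_i(\gamma)) = \trlen(\rho_i(\iota(\gamma)))$ for every $i$.

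Next I would appeal to the characterisation of $S'$ implicit in the proof of \cref{diverge/converge}: up to isotopy, and after passing to a subsequence, $S'$ is determined by the rule that a non-peripheral simple closed curve $\gamma \subset S$ is isotopic into $S'$ if and only if $\{\trlen(\rho_i(\gamma))\}$ is bounded, which, by the efficiency of pleated surfaces recorded there, is equivalent to boundedness of $\{\len_{m_i}(\gamma)\}$ along the sequence of pleated surfaces. By the preceding paragraph, this criterion is $\iota$-invariant on curves lying in $W_0 \cap \partial M$. Since the isotopy class of a subsurface is determined by the isotopy classes of the essential simple closed curves it carries (relative to its frontier), we obtain that $S' \cap (W_0 \cap \partial M)$ admits an $\iota$-invariant representative $\widehat S$; its image $p(\widehat S)$ is then a subsurface of $\Sigma_0$ doubly covered by $\widehat S$, which is exactly the asserted statement.

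The subtle point requiring most care is passing from curve-wise invariance of the defining condition to genuine isotopy-invariance of a representative subsurface. I would address this by running the argument of \cref{surviving part} with an $\iota$-symmetrised choice of auxiliary data: replace each shortest pants decomposition $\alpha_i$ on $W_0 \cap \partial M$ by $\alpha_i \cup \iota(\alpha_i)$, using the equality $\trlen(\rho_i(\gamma)) = \trlen(\rho_i(\iota(\gamma)))$, together with the efficiency of pleated surfaces, to see that the total $m_i$-length of the symmetrised system stays of the same order as that of $\alpha_i$. Consequently the dichotomy between bounded and unbounded lengths, which drives the selection of the surviving pants in \cref{surviving part}, is unaffected. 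The union of the surviving pants is $\iota$-invariant by construction and provides the desired representative $\widehat S$, completing the proof.
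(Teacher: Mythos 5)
Your starting observation---that for every essential simple closed curve $\gamma$ on $W_0 \cap \partial M$, the curve $\iota(\gamma)$ is conjugate to $\gamma$ in $\pi_1(M)$ (they cobound a vertical annulus, or $\gamma$ is $\iota$-invariant and bounds a vertical M\"obius band when $p(\gamma)$ reverses orientation), so that $\trlen(\rho_i(\gamma))=\trlen(\rho_i(\iota(\gamma)))$---is correct and is the heart of the matter. But the passage from curve-wise invariance to an $\iota$-invariant representative of $S'$ has a gap. Symmetrising the pants decomposition to $\alpha_i\cup\iota(\alpha_i)$ need not produce a pants decomposition: the metric $m_i$ coming from the pleated surface $f_i$ on $S$ is not itself $\iota$-invariant, so $\iota(\alpha_i)$ may cross $\alpha_i$ essentially, and knowing the total length is of the same order does nothing to repair that. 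Moreover, the construction in \cref{surviving part} depends not only on the length dichotomy but also on the Fenchel--Nielsen twist parameters around the constant curves $\beta$, and nothing in your argument explains why those twists should be $\iota$-invariant; the efficiency of pleated surfaces only compares lengths, not twists.

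The paper circumvents both difficulties by choosing a pleated surface that is symmetric from the outset: it uses a non-orientable efficient pleated surface $g_i\colon\Sigma_0\to \hyperbolic/\rho_i(\pi_1(M))$ with domain the base surface itself, and lifts it through a double cover in which $W_0$ becomes a product $I$-bundle to an orientable pleated surface $\tilde g_i\colon(\tilde\Sigma_0,n_i)\to\hyperbolic/\rho_i(\pi_1(\tilde M))$. The metric $n_i$ is a pullback from $\Sigma_0$ and hence literally $\iota$-invariant, so the shortest pants decomposition, the cutting curves, and the twist parameters in the proof of \cref{surviving part} can all be chosen $\iota$-equivariantly, and the surviving subsurface descends to $\Sigma_0$. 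Your conjugacy observation is morally equivalent to the existence of such a non-orientable pleated surface, but the paper's route replaces the delicate bookkeeping with an exactly symmetric metric; to make your version rigorous you would essentially be led to that same choice.
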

\begin{proof}
We can consider a non-orientable efficient pleated surface $g_i \colon \Sigma_0 \to \hyperbolic/\rho_i(\pi_1(M))$.
Consider a double cover $\tilde M$ of $M$ into which $W_0$ is lifted to a product $I$-bundle.
Then $g_i$ is lifted to an orientable pleated surface $\tilde g_i \colon (\tilde \Sigma_0, n_i) \to \hyperbolic/\rho_i(\pi_1(\tilde M))$, and both the  length with respect to $n_i$ and the translation length with respect to $\rho_i$ are invariant under the covering involution.
This implies \cref{twisted}.
\end{proof}

\begin{proof}[Proof of \cref{rectified Thurston}]
We have only to consider the case when $\{\rho_i\}$ diverges in $AH(M,P)$.
(Otherwise we can just take $X$ to be empty.)
Let $W$ be the characteristic submanifold of $(M,P)$.
For every  component $S$ of $\partial M\setminus P$, there is a subsurface $S'$ as was given in \cref{diverge/converge}.
Since any component of $\partial M \setminus W$ that intersects $S'$ must be entirely contained in $S'$, every frontier component of $S'$ is contained in either an $I$-pair or a solid torus pair or a thickened torus pair.

We only consider the frontier components contained in $I$-pairs.
Let $c$ be such a frontier component of $S'$ contained an $I$-pair $W_0$.
If $W_0$ is a product $I$-pair, then there is a simple closed curve $c'$ on $W_0 \cap \partial M$ such that $c\cup c'$ bounds an essential (vertical) annulus $A$ in $W_0$.
By our definition of $S'$, if $c'$ is contained in $S$, then it must also be a frontier component of $S'$.
Otherwise, let $\bar S$ be a component of $\partial M \setminus P$ containing $c'$.
Then we have a subsurface $\bar S'$ of $\bar S$ corresponding to $S'$ in \cref{diverge/converge}, and $c'$ is a frontier component of $\bar S'$.

If $W_0$ is a twisted $I$-pair, then by \cref{twisted}, either $c$ is homotopic to $c'$ as above and $c\cup c'$ bounds an annulus $A$, or $c$ doubly covers an orientation-reversing curve in the base surface.
In the latter case, $c$ bounds an essential one-sided M\"{o}bius band, which we also denote by $A$.

We remove from $M$ thin regular neighbourhoods of  all annuli and M\"{o}bius bands $A$ as above for all frontier components of $S'$ contained in $I$-pairs, and obtain  a 3-submanifold $M_0$ of $M$.
We let the union of $P$ and  the frontier components of $M_0$ in $M$, the latter of which are all annuli, be a paring locus $P_0$ of $M_0$.
By our construction, it is easy to check that $(M_0, P_0)$ is a pared manifold, and that $M \setminus M_0$ is contained in the union of $I$-pairs in $W$, and can be assumed to be vertical with respect to the fibration.
We let the projection of $M \setminus M_0$ to the base surface be $x$ in our statement, and hence $X$ coincides with $M \setminus M_0$.

Now restricting $\rho_i$ to $(M_0,P_0)$, we get a sequence $\{\rho_i'\}$ of $\pi_1(M_0)$.
If $\{\rho_i\}$ converges after passing to a subsequence, then by letting $\mathcal A$ be empty and setting $M'=M$, we are done.
Suppose that $\{\rho_i'\}$ does not have a convergent subsequence.
We note that the translation length of every curve on a component of $P_0$ with respect to $\rho_i$ is bounded, by our definition of $S'$ in \cref{diverge/converge}.
Therefore, we can apply \cref{dual annular}, and there are a pared manifold $(M_0', P_0)$ obtained from $(M_0, P_0)$ by shuffling and a codimension-1 annular measured lamination $L_0$ properly embedded in $(M_0', P_0)$ whose dual tree is equivariantly isometric to the Gromov limit of the action of $\pi_1(M_0)$ on rescaled $\hyperbolic$.
By performing the same shuffling that gives $M_0'$ from $M_0$, we obtain a pared manifold $(\hat M_0, P)$ homotopy equivalent of $(M,P)$.

By our definition of $M_0$, and by \cref{diverge/converge}, the lamination $L_0$ is disjoint from the $I$-pairs in the characteristic submanifold of $(M_0', P_0)$, and hence contained in the union of solid torus pairs and the thickened torus pairs.
This means that $L_0$ is a union of weighted incompressible annuli, possibly with singular axes.
We let $\mathcal A_0$ be the support of $L_0$.
Now, we cut $M_0'$ along $\mathcal A_0$, and obtain a 3-manifold $M_1$.
In the case when $\mathcal A_0$ contains an annulus with singular axis, we regard $M_0'$ as containing a regular neighbourhood of the singular axis on both sides.
By letting $P_1$ be the union of $\mathcal A_0 \cup (P_0 \cap M_1)$ when there are no singular axes, and when a component of $\mathcal A_0$ has a singular axis, we let the intersection of the regular neighbourhood of the singular axis with the boundary be contained in $P_1$, we get a pared manifold $(M_1, P_1)$.
We consider restrictions of $\rho_i$ to $\pi_1(M_1)$, and if they converge passing to a subsequence, we let $\mathcal A$ be $\mathcal A_0$, setting $(M', P')$ to be $(\hat M_0, P)$.
Otherwise, we repeat the same procedure for $(M_1, P_1)$ as for $(M_0, P_0)$, and obtain the next pared manifold $(M_2, P_2)$ contained in a pared manifold $(\hat M_1, P)$ obtained by shuffling from $(\hat M_0, P)$.
Since there are only finitely many disjoint incompressible annuli in $M$, this process terminates in finite steps, and there is $n_0$ such that $\rho_i|\pi_1(M_{n_0})$ converges.
Then we set $(\hat M_{n_0-1}, P)$ to be $(M',P')$, and  $\mathcal A=\cup_{j=0}^{n_1}\mathcal A_j$ regarded as contained in $(M',P')$.
Then together with $x$ which we defined above, also regarded as contained in $(M',P')$, we are done.
%
%
%
%
%
%
%Then by \cref{Morgan-Shalen 1}, passing to a subsequence $\{\rho_{i_j}\}$, there is an isometric action $\rho_\infty \colon \pi_1(M) \to \isom(T)$ on an $\reals$-tree with small edge stabilisers, which is a rescaled Gromov limit of the action of $\{\rho_{i_j}(\pi_1(M))\}$.
%By \cref{dual annular}, there is a 3-manifold $(M',P')$ obtained from $(M,P)$ by shuffling around solid torus pairs and thickened torus pair of $(M,P)$ and an annular codimension-1 measured lamination $L'$ in $M'$ disjoint from $P'$ such that there is a $\pi_1(M')=\pi_1(M)$-equivariant isometry from the dual tree $T_{L'}$ of $L'$ to $T$.
%
%Let $\Phi$  be a component of $\mathcal W$ that intersects $L'$, and let $\Sigma$ be its base surface with respect to the fibration $p$.
%Let $\Sigma(L)$ be the minimal supporting surface of the measured geodesic lamination  isotopic to $p(L' \cap \Phi)$.
%Let $x$ be the union of all such subsurfaces $\Sigma(L)$ of  $w$ for all components of $\mathcal W$ intersecting $L'$.
%On the other hand, we set $\mathcal A$ to be $\mathcal V \cap L'$.
%
\end{proof}

\section{Implication of the bounded image theorem}
To explain the significance of \cref{Thurston} in the proof of Thurston's uniformisation theorem for Haken manifolds, we first review the overall structure of the proof.
Let $M$ be a closed atoroidal Haken manifold.
In general, we should consider a pared manifold $(M,P)$ and show the existence of a geometrically finite hyperbolic structure, but 
for simplicity, here we assume that $M$ is closed, and hence $P=\emptyset$.

By Waldhausen's result \cite{Wa}, a Haken manifold admits a \lq hierarchy', that is,  a finite sequence of compact 3-manifolds $M=M_0, M_1, \dots , M_n$ such that $M_{j+1}$ is obtained by cutting $M_j$ along a non-peripheral incompressible surface, and $M_n$ is a disjoint union of 3-balls.
The basic strategy of the proof of the uniformisation theorem is to construct a convex hyperbolic structure of $M_j$ from that of $M_{j+1}$.
Topologically $M_j$ is obtained from $M_{j+1}$ by pasting an incompressible subsurface on the boundary  of $M_{j+1}$ to another incompressible subsurface disjoint from the first one.
For a general step, we need to separate the boundary of $M_{j+1}$ into two parts: the part which is glued each other to get $M_j$, and the part which remains to be the boundary of $M_j$.
This makes the argument complicated.
Therefore, we here consider only the last step in which each boundary component $M_1$ is glued to another component to get $M_0$.

In the logical structure of the proof given by Thurston in the form of a chart on the page 206 of \cite{Th1}, \cref{Thurston} is meant to be used to prove the \lq bounded image theorem', which is a key step for showing that the \lq skinning map'  defined below has a fixed point in the case when $M_1$ is not an $I$-bundle over a surface.
The bounded image theorem in the original version is as below, restricted to the case which we are considering, \ie the case when the paring locus is empty.

To state the theorem, we need to define the skinning map.
Let $M$ be a 3-manifold admitting a convex compact hyperbolic metric.
Let $q \colon \teich(\partial M) \to QH_0(M)$ be the parametrisation of Marden.
For each element of $QH_0(M)$, by taking the covering associated with a component $S$ of $\partial M$, we get a quasi-Fuchsian representation, and by the Bers simultaneous uniformisation, we have a point in $\teich(S) \times \teich(\bar S)$.
We consider the projection to the second coordinate $\teich(\bar S)$, and taking the product by considering all the components of $\partial M$, we have a map $\sigma \colon \teich(\partial M) \to \teich(\bar \partial M)$, where the latter denotes the Teichm\"{u}ller space of $\partial M$ with its orientation reversed.

\begin{theorem}[The bounded image theorem]
\label{bounded image}\index{bounded image theorem}
Let $M$ be an atoroidal Haken manifold with incompressible boundary admitting a convex compact hyperbolic metric, and $\tau \colon \partial M \to \partial M$ be an orientation-reversing involution taking each component of $\partial M$ to another component.
Suppose that $N=M/\tau$ is also atoroidal.
Let $\sigma \colon \teich(\partial M) \to \teich(\bar \partial M)$ be the skinning map.
Then there is $n_0 \in \naturals$ depending only on the topological type of $M$ such that $(\tau_*\circ \sigma)^n \teich(\partial M)$ is precompact for every $n \geq n_0$.
\end{theorem}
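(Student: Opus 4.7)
The plan is to establish \cref{bounded image} by contradiction, propagating divergence through the iteration with the help of \cref{Thurston} (in the original form assumed valid, as this is how Thurston intended to use it). Assume $(\tau_*\circ\sigma)^n\teich(\partial M)$ is not precompact for arbitrarily large $n$. Choose $n_i\to\infty$ and $m_i\in\teich(\partial M)$ so that $\mu_i=(\tau_*\circ\sigma)^{n_i}(m_i)$ leaves every compact set of $\teich(\partial M)$. Since $q\colon\teich(\partial M)\to QH_0(M)$ is a homeomorphism, the representations $\rho_i=q(\mu_i)$ diverge in $AH(M)$. Applying \cref{Thurston} and passing to a subsequence, we extract a subsurface $x\subset\mathrm{wb}(M)$ and, setting $X=p^{-1}(x)$, have convergence of $\rho_i$ precisely on fundamental groups of components of $M\setminus X$ and on no larger subgroup.

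The central step is to track how this subsurface evolves backwards through one application of $\tau_*\circ\sigma$. Writing $\mu_i^{(k)}=(\tau_*\circ\sigma)^k(m_i)$ and $\rho_i^{(k)}=q(\mu_i^{(k)})$, we let $x_k$ be the subsurface produced by \cref{Thurston} for the subsequence of $\{\rho_i^{(k)}\}$. For each component $S$ of $\partial M$, the Bers coordinates of the quasi-Fuchsian cover of $\hyperbolic/\rho_i^{(k-1)}(\pi_1(M))$ associated with $\pi_1(S)$ are $(\mu_i^{(k-1)}|_S,\sigma(\mu_i^{(k-1)})|_{\bar S})$, and by \cref{diverge/converge,twisted} the divergence of $\sigma(\mu_i^{(k-1)})|_{\bar S}$ in $\teich(\bar S)$ is confined, up to isotopy, to the subsurface of $\bar S$ on which $\rho_i^{(k-1)}|_{\pi_1(S)}$ itself loses convergence. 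The second clause of \cref{Thurston} identifies this exactly with $x_{k-1}\cap S$. Pushing forward under $\tau$, this gives a transport relation $x_k\cap\partial M\subseteq\tau(x_{k-1}\cap\partial M)$ up to isotopy, and correspondingly $x_k\subseteq$ the window-subsurface obtained from $x_{k-1}$ by applying $\tau$ along $\partial M$.

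Because there are only finitely many isotopy classes of incompressible subsurfaces of $\mathrm{wb}(M)$, the monotone sequence $x_{n_i}, x_{n_i-1}, \ldots$ eventually stabilises. Hence, after passing to a subsequence and deep enough into the iteration, one arrives at a non-empty $y\subset\mathrm{wb}(M)$ whose trace on $\partial M$ is $\tau$-invariant up to isotopy. Setting $Y=p^{-1}(y)\subset M$, one obtains an $I$-bundle whose frontier in $M$ is a union of essential annuli and whose intersection with $\partial M$ is $\tau$-invariant. Gluing $Y$ along $\tau$ in $N=M/\tau$ therefore yields either an incompressible torus (arising from the closed $I$-bundle over an annular component of $y$, or from a pair of $\tau$-paired frontier annuli) or, in the general case, an essential annulus or torus constructed from the $\tau$-matched frontier annuli of $Y$. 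Either outcome contradicts the standing hypothesis that $N$ is atoroidal, completing the argument.

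The hardest part of this plan is the transport step: showing rigorously that the divergence subsurface $x_k$ is controlled by $\tau(x_{k-1})$. The assertion rests on a strong localisation principle — that skinning coordinates can diverge only along subsurfaces where the ambient three-dimensional representation itself fails to converge — and it is precisely the second clause of \cref{Thurston}, with its claim that \emph{no larger subgroup} has convergent subsequences, that makes the transport an equality rather than a loose containment. Since the present paper shows that clause to be false, this step is also where Thurston's intended proof strategy breaks down, and why only the weakened \cref{rectified Thurston} can be salvaged from his original argument.
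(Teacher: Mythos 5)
First, a framing point: the paper does not claim to prove \cref{bounded image} here---it explicitly says this is a speculated reconstruction of Thurston's intended argument, one that \emph{would} prove the theorem \emph{if} the second part of \cref{Thurston} were valid, which the paper shows it is not (the actual proof is by Lecuire--Ohshika \cite{LO}). You correctly flag this at the end, so your framing matches the paper's.

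However, your reconstruction contains a genuine gap that the paper's version does not. You write that ``since $q\colon\teich(\partial M)\to QH_0(M)$ is a homeomorphism, the representations $\rho_i=q(\mu_i)$ diverge in $AH(M)$.'' This inference is false. Divergence of $\mu_i$ in $\teich(\partial M)$ only means $\rho_i$ leaves every compact subset of $QH_0(M)$; since $QH_0(M)$ is open but not closed in $AH(M)$, $\rho_i$ can perfectly well converge in $AH(M)$ to a point on $\partial QH_0(M)$. The paper splits into exactly these two cases. When $\{q(\tau_*\circ\sigma)^n(m_i)\}_i$ does converge in $AH(M)$, your argument never engages: the paper then distinguishes whether the limit has a parabolic (producing, via work in large finite covers, an essential annulus in $M$ whose $\tau$-image is a new annulus, feeding back into the torus/Klein bottle construction), or whether the limit has a totally degenerate end (excluded by the covering theorem and the fact that $M$ is not an $I$-bundle). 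Your entire proposal lives in the ``diverges in $AH(M)$'' branch and silently assumes that is the only possibility, so it cannot be considered a reconstruction of the full argument.

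On the branch you do treat, your ``transport'' step is organised differently from the paper's. The paper fixes $n$, extracts a collection $\mathcal A_n$ of essential annuli from the (putative) second clause of \cref{Thurston}, observes that the converging/diverging division is $\tau$-compatible (citing Proposition~16.5 of \cite{Kap}), and then \emph{chains} annuli from $\mathcal A_n$ to $\mathcal A_{n+1}$ along $\tau$-matched boundary curves to build a torus or Klein bottle in $N$. You instead try to show that the diverging subsurface $x_k$ is monotone under $\tau$ as $k$ decreases and so stabilises to a $\tau$-invariant $y$. This is a more aggressive and somewhat under-justified localisation: the claimed transport inclusion $x_k\cap\partial M\subseteq\tau(x_{k-1}\cap\partial M)$ would need a careful use of \cref{diverge/converge}, of the Bers slice structure of the skinning map, and of the (false) uniqueness-up-to-isotopy in the second clause of \cref{Thurston}; the paper's chaining argument is more robust because it only needs a matching at adjacent stages, not a global stabilisation. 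Either way both versions are sunk by the invalidity of \cref{Thurston}'s second clause, which is the point the paper makes, and you are right that \cref{rectified Thurston} does not rescue the argument because the annuli it produces live in a shuffled manifold $M'$ rather than in $M$ itself.
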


There are three important books which explain  the proof of  the uniformisation theorem in detail: Morgan \cite{Mo}, Kapovich \cite{Kap} and Otal \cite{Otal-Haken}.
In the first two of them, the bounded image theorem is mentioned, but it is noted that its proof is unknown to the authors.
Instead, they gave a proof of  a weaker version of the theorem, which should be called the \lq bounded orbit theorem' as follows.
This theorem was first stated in Thurston's preprint \cite{ThO}.
To prove this, only the first part of \cref{Thurston}, which is valid, and some argument involving the \lq covering theorem' are necessary.
In the third book \cite{Otal-Haken}, the author invokes McMullen's results \cite{Mc1, Mc2}, which implies \cref{bounded orbit} without even using the first part of \cref{Thurston}.

\begin{theorem}[The bounded orbit theorem]
\label{bounded orbit}\index{bounded orbit theorem}
In the setting of \cref{bounded image}, let $g$ be any point in $\teich(\partial M)$.
Then the sequence $\{(\tau \circ \sigma)^n_*(g)\}$ has a convergent subsequence in $\teich(\partial M)$.
\end{theorem}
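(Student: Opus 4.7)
The plan is to work in $AH(M)$ via the Ahlfors--Bers--Kra--Maskit--Marden parametrisation $q\colon \teich(\partial M) \to QH_0(M)$, which is a homeomorphism by the incompressibility of $\partial M$. Setting $\rho_n := q(g_n)$ with $g_n := (\tau \circ \sigma)^n_*(g)$, the conclusion is equivalent to the assertion that $\{\rho_n\}$ has a convergent subsequence in $AH(M)$. I will argue by contradiction, assuming no such subsequence exists.

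First, I would invoke the (valid) first sentence of \cref{Thurston}: for every subgroup $\Gamma \subset \pi_1(M)$ conjugate to the fundamental group of a component of $M \setminus \wind(M,\emptyset)$, the restrictions $\{\rho_n|\Gamma\}$ are bounded modulo conjugacy. A diagonal extraction over the finitely many components of the complement, followed by algebraic convergence on each restricted subgroup, yields a subsequence (still denoted $\rho_n$) on which every such restriction converges to a discrete faithful representation.

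Second, I would exploit the iterative structure to descend to $N = M/\tau$. The identity $g_{n+1} = \tau_*\sigma(g_n)$ encodes that the conformal structure at infinity of $\rho_{n+1}$ on $\partial M$ is the $\tau$-image of the conformal structure appearing on the far side of the convex core of $\rho_n$; consequently, the convex cores of successive $\rho_n$'s can be $\tau$-glued into manifolds approximating a hyperbolic structure on $N$. Pass to a geometric limit (after a further subsequence) to obtain a complete hyperbolic manifold $N_\infty$ covered by a hyperbolic manifold $M_\infty$ homotopy equivalent to $M$. Apply Canary's covering theorem to the cover $M_\infty \to N_\infty$: since $N$ is closed and atoroidal, $N_\infty$ admits no geometrically infinite ends, and every end of the cover $M_\infty$ is geometrically finite. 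This forces algebraic convergence of $\{\rho_n\}$ on the full group $\pi_1(M)$, contradicting the divergence hypothesis.

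The main obstacle is the second step: making the $\tau$-gluing rigorous and extracting a legitimate geometric limit whose quotient is genuinely a hyperbolic manifold homotopy equivalent to $N$. The key insight is that if divergence were supported on the window (as the first step forces), the rescaled Morgan--Shalen limit of $\{\rho_n\}$ would dualise via \cref{dominating lamination} to an annular codimension-1 measured lamination in $M$, which under the $\tau$-gluing would descend to an essential torus or annulus in $N$, contradicting the atoroidality of $N$. Thus the first part of \cref{Thurston} confines any possible divergence to the window, and Canary's covering theorem combined with atoroidality of $N$ eliminates the remaining degrees of freedom, delivering the required convergent subsequence in $\teich(\partial M)$.
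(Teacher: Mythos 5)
The paper itself does not give a self-contained proof of this theorem; it defers to Morgan \cite{Mo}, Kapovich \cite{Kap} (via the first part of \cref{Thurston} plus the covering theorem) and Otal \cite{Otal-Haken} (via McMullen's contraction results, bypassing \cref{Thurston} entirely). Your first two steps---restricting to complements of window components using the valid first sentence of \cref{Thurston}, then feeding the iterative structure into a geometric-limit argument on $N=M/\tau$ and invoking Canary's covering theorem---follow the Kapovich route in outline, though the $\tau$-gluing and extraction of a geometric limit are asserted rather than carried out, which is exactly where most of the work in \cite{Kap} lives.

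The genuine gap is in what you call the ``key insight.'' You propose that if divergence is supported on the window, the rescaled Morgan--Shalen limit dualises to an annular codimension-1 measured lamination \emph{in $M$} which descends under $\tau$ to an essential torus or annulus in $N$. This is precisely the argument the present paper shows does not go through: \cref{dominating lamination} yields only a $\pi_1$-equivariant \emph{morphism} $T_L\to T$, not an isomorphism; \cref{not Skora} exhibits a diverging sequence in $AH(M)$ for which \emph{no} annular lamination in $M$ is dual to the limit tree; and \cref{dual annular} repairs this only at the cost of passing to a shuffled manifold $M'$ homotopy equivalent to, but possibly not homeomorphic to, $M$. The final paragraph of Section~3 states explicitly that annuli in $M'$ rather than in $M$ cannot be concatenated across $\tau$ to produce a torus or Klein bottle in $N$, and this is exactly why the second part of \cref{Thurston} was indispensable to Thurston's intended proof of the \emph{bounded image} theorem. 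So your ``key insight'' reproduces the flawed step rather than avoiding it. For the weaker \emph{bounded orbit} theorem, the point of the paper's discussion is that this route is unnecessary: either develop the covering-theorem argument in full rigour (the structure you sketch in your step two, which needs substantial elaboration and does not need the lamination), or invoke McMullen's results directly. Finally, note that a convergent subsequence of $\{\rho_n\}$ in $AH(M)$ does not by itself give convergence in $\teich(\partial M)$---the limit could lie on $\partial QH_0(M)$---so you must also show the limit is geometrically finite without new parabolics; this is part of what the covering-theorem argument is for.
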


Quite recently, Lecuire and the present author \cite{LO} succeeded in proving the bounded image theorem in the original general form, in particular \cref{bounded image},  making use of modern technique of Kleinian group theory, but not using the second part of \cref{Thurston}.
In the process of proving this, it has become possible to imagine what was Thurston's plan to prove the bounded image theorem.
As far as we understand it, his proof essentially relies on the second part of \cref{Thurston}.
We here try to recover the \lq proof' which we speculate that he should have had in mind.

The \lq proof'  is by contradiction.
By the continuity of $\tau_* \circ \sigma$, we see that if $(\tau_* \circ \sigma)^n \teich(S)$ is precompact, then so is $(\sigma \circ \tau)^m$ for every $m \geq n$.
Therefore, what we have to exclude is the case when $(\tau_* \circ \sigma)^n_*\teich(S)$ is unbounded for every $n$.
If the image $(\tau_*\circ \sigma)^n \teich(S)$ is unbounded for every $n$, there must exist a sequence $\{m_i\}$ in $\teich(S)$ such that $\{(\tau_*\circ \sigma)^n m_i\}_i$ diverges in $\teich(S)$ for every $n$.
Then, passing to a subsequence, there are two possibilities to consider: the first is when $\{(q(\tau_* \circ \sigma)^n(m_i)\}_i$ diverges  in $AH(M)$; the second is when $\{q((\tau_*\circ \sigma)^n(m_i)\}_i$ converges in $AH(M)$.

For the first possibility, the second part of \cref{Thurston} is essential to reach a contradiction.
We assumed that $\{(q(\tau_*\circ \sigma)^n(m_i)\}_i$ diverges  in $AH(M)$ for every $n$.
Then, for a fixed $n$, provided that  the second part of \cref{Thurston} were valid, it would give us a collection $\mathcal A_n$ of incompressible annuli which separates $M$ into \lq converging part' and \lq diverging part' for every $n$.
It can be proved that this division into two parts is preserved under $\tau$, by the argument similar to the one employed in \S16, in particular Proposition 16.5 of \cite{Kap}.
Therefore for each annulus in $\mathcal A_n$, its boundary is mapped by $\tau$ to a simple closed curve which lies on the boundary of an annulus in $\mathcal A_{n+1}$. 
We see that by continuing to connect an annulus in $\mathcal A_n$ to that of $\mathcal A_{n+1}$ along a boundary starting from $n=1$,  we have either an incompressible torus or an incompressible Klein bottle in $N$, contradicting the assumption that $N$ is atoroidal.

Now, suppose that $\{q(\tau_* \circ \sigma)^n(m_i)\}$ converges in $AH(M)$ as $i \to \infty$ for every $n$.
Then the argument is quite similar to that of the bounded orbit theorem explained in \cite{Kap}, or the proof of the bounded image theorem for the special case when $M$ is acylindrical explained in \cite{Kent}.
We can consider the limit representation $\phi_\infty \colon \pi_1(M) \to \pslc$ of $\{q(\sigma \circ \tau)^n(m_i)\}$ as $i \to \infty$ for each $n$.
Then either $\phi_\infty(\pi_1(M))$ has a parabolic element  or $\hyperbolic/\phi_\infty(\pi_1(M))$ has a totally degenerate end, \ie, an end with a neighbourhood of a form $S \times (0,\infty)$ for a closed surface $S$ which is entirely contained in the convex core.
In the latter case, there is  a sequence of pleated surfaces $f_k \colon S \to \hyperbolic/\phi_\infty(\pi_1(M))$ tending to the end as $k \to \infty$.
These facts are proved by Bonahon \cite{Bo} in more general setting, but for the present case, there is an explanation by Thurston himself  in \S9 of his lecture notes \cite{ThL}.

In the first case when $\phi_\infty(\pi_1(M))$ has a parabolic element, there must be a simple closed curve $c$ on $\partial M$ representing a parabolic element.
(This follows from the fact that otherwise the limit $\rho_\infty$ is a strong limit.)
This will force $\tau(c)$ to be homotopic in $M$ to a another simple closed curve $c'$ on $\partial M$ homotopically distinct from $\tau(c)$, which implies that $\tau(c)$ and $c'$ bound an incompressible annulus in $M$.
We note here that we cannot prove this by considering the limit group $\phi_\infty(\pi_1(M))$ since $\hyperbolic/\phi_\infty(\pi_1(M))$ may not be homeomorphic to the interior of $M$ although they are homotopy equivalent.
Therefore we need to work in $\hyperbolic/q(\tau_*\circ \sigma)^n(m_i)$ for very large $i$.
The same argument can be found in \S17.2.3 in \cite{Kap}.
Once we have such an annulus, we repeat the same argument as the one which we used for the case when $\{q(\tau_* \circ \sigma)^n(m_i)\}$ diverges, and reach a contradiction to the assumption that $N$ is atoroidal.
%For this part, we need an involved analysis of the behaviour of $\{\sigma \circ \tau)^n(m_i\}$ and $\{(\sigma \circ \tau)^{n-1}(m_i)\}$ as was done in \cite{LO}.
%We are not so sure if tools which Thurston had in hands are sufficient for this part.

We next consider the latter case when $\hyperbolic/\phi_\infty(\pi_1(M))$ has  a totally degenerate end.
Then we see that there must be one corresponding to a boundary component $S$ of $M$, for under the assumption that there is no parabolic element in $\phi_\infty(\pi_1(M))$, the convergence must be strong.
By the covering theorem, which first appeared in \S9 of \cite{ThL} (a generalised version can be found in Canary \cite{Ca}), this implies that $M$ must have a boundary component which is homotopic to $\tau(S)$, but is distinct from $\tau(S)$.
 This is impossible since $M$ is not a product $I$-bundle over a closed surface.
 
Thus we have reached a contradiction in either case, and this would give a proof of \cref{bounded image} if the second part of \cref{Thurston} were valid.
On the other hand, a weaker version as we showed in \cref{rectified Thurston} is not sufficient for the proof of \cref{bounded image} since to construct a torus or a Klein bottle to reach a contradiction, we need annuli embedded in $M$, not those embedded in a homotopy equivalent manifold $M'$.
This observation also highlights why a proof of the bounded image theorem given in \cite{LO} without relying on the second part of \cref{Thurston} needed much sophisticated technique of Kleinian group theory. 
%\section{An application to the bounded image theorem}


\begin{thebibliography}{10}

\bibitem{AC}
{\sc Anderson, J.~W., and Canary, R.~D.}
\newblock {Algebraic limits of Kleinian groups which rearrange the pages of a
  book}.
\newblock {\em Inventiones mathematicae 126}, 2 (00 1996), 205 -- 214.

\bibitem{Ber}
{\sc Bers, L.}
\newblock {Simultaneous uniformization}.
\newblock {\em Bulletin of the American Mathematical Society 66}, 2 (00 1960),
  94 -- 97.

\bibitem{Bers2}
{\sc Bers, L.}
\newblock Spaces of {K}leinian groups.
\newblock In {\em Several {C}omplex {V}ariables, {I} ({P}roc. {C}onf., {U}niv.
  of {M}aryland, {C}ollege {P}ark, {M}d., 1970)\/} (1970), Springer, Berlin,
  pp.~9--34.

\bibitem{Be}
{\sc Bestvina, M.}
\newblock Degenerations of the hyperbolic space.
\newblock {\em Duke Math. J. 56}, 1 (1988), 143--161.

\bibitem{Bo}
{\sc Bonahon, F.}
\newblock Bouts des vari\'{e}t\'{e}s hyperboliques de dimension {$3$}.
\newblock {\em Ann. of Math. (2) 124}, 1 (1986), 71--158.

\bibitem{BBCL}
{\sc Brock, J., Bromberg, K., Canary, R., and Lecuire, C.}
\newblock {Convergence and divergence of Kleinian surface groups}.
\newblock {\em Journal of Topology 8}, 3 (00 2015), 811 -- 841.

\bibitem{BBCM}
{\sc Brock, J.~F., Bromberg, K.~W., Canary, R.~D., and Minsky, Y.~N.}
\newblock {Windows, cores and skinning maps}.
\newblock {\em Annales scientifiques de l'{\'E}cole normale sup{\'e}rieure 53},
  1 (2020), 173--216.

\bibitem{Ca}
{\sc Canary, R.~D.}
\newblock A covering theorem for hyperbolic {$3$}-manifolds and its
  applications.
\newblock {\em Topology 35}, 3 (1996), 751--778.

\bibitem{CM}
{\sc Canary, R.~D., and McCullough, D.}
\newblock {Homotopy equivalences of 3-manifolds and deformation theory of
  Kleinian groups}.
\newblock {\em Memoirs of the American Mathematical Society 172}, 812 (00
  2004), xii -- 218.

\bibitem{CS}
{\sc Culler, M., and Shalen, P.~B.}
\newblock {Varieties of group representations and splittings of 3-manifolds}.
\newblock {\em Annals of Mathematics 117}, 1 (00 1983), 109 -- 146.

\bibitem{JS}
{\sc Jaco, W.~H., and Shalen, P.~B.}
\newblock Seifert fibered spaces in {$3$}-manifolds.
\newblock {\em Mem. Amer. Math. Soc. 21}, 220 (1979), viii+192.

\bibitem{Jo}
{\sc Johannson, K.}
\newblock {\em Homotopy equivalences of {$3$}-manifolds with boundaries},
  vol.~761 of {\em Lecture Notes in Mathematics}.
\newblock Springer, Berlin, 1979.

\bibitem{Kap}
{\sc Kapovich, M.}
\newblock {\em Hyperbolic manifolds and discrete groups}.
\newblock Modern Birkh{\"a}user Classics. Birkh{\"a}user Boston, Inc., Boston,
  MA, 2009.
\newblock Reprint of the 2001 edition.

\bibitem{Kent}
{\sc Kent, R.~P.}
\newblock {Skinning maps}.
\newblock {\em Duke Mathematical Journal 151}, 2 (02 2010), 279 -- 336.

\bibitem{Kra}
{\sc Kra, I.}
\newblock On spaces of {K}leinian groups.
\newblock {\em Comment. Math. Helv. 47\/} (1972), 53--69.

\bibitem{LO}
{\sc Lecuire, C., and Ohshika, K.}
\newblock Thurston's bounded image theorem.
\newblock {\em to appear in Geom. Top.\/}.

\bibitem{Mar}
{\sc Marden, A.}
\newblock {The geometry of finitely generated kleinian groups}.
\newblock {\em Annals of Mathematics 99}, 3 (00 1974), 383 -- 462.

\bibitem{Mas}
{\sc Maskit, B.}
\newblock Self-maps on {K}leinian groups.
\newblock {\em Amer. J. Math. 93\/} (1971), 840--856.

\bibitem{Masbook}
{\sc Maskit, B.}
\newblock {\em Kleinian groups}, vol.~287 of {\em Grundlehren der
  mathematischen Wissenschaften [Fundamental Principles of Mathematical
  Sciences]}.
\newblock Springer-Verlag, Berlin, 1988.

\bibitem{Mc1}
{\sc McMullen, C.}
\newblock Amenability, {P}oincar\'{e} series and quasiconformal maps.
\newblock {\em Invent. Math. 97}, 1 (1989), 95--127.

\bibitem{Mc2}
{\sc McMullen, C.}
\newblock Iteration on {T}eichm\"{u}ller space.
\newblock {\em Invent. Math. 99}, 2 (1990), 425--454.

\bibitem{Mo}
{\sc Morgan, J.~W.}
\newblock On {T}hurston's uniformization theorem for three-dimensional
  manifolds.
\newblock In {\em The {S}mith conjecture ({N}ew {Y}ork, 1979)}, vol.~112 of
  {\em Pure Appl. Math.} Academic Press, Orlando, FL, 1984, pp.~37--125.

\bibitem{M-O}
{\sc Morgan, J.~W., and Otal, J.-P.}
\newblock Relative growth rates of closed geodesics on a surface under varying
  hyperbolic structures.
\newblock {\em Comment. Math. Helv. 68}, 2 (1993), 171--208.

\bibitem{MS1}
{\sc Morgan, J.~W., and Shalen, P.~B.}
\newblock Valuations, trees, and degenerations of hyperbolic structures. {I}.
\newblock {\em Ann. of Math. (2) 120}, 3 (1984), 401--476.

\bibitem{MS2}
{\sc Morgan, J.~W., and Shalen, P.~B.}
\newblock Degenerations of hyperbolic structures. {II}. {M}easured laminations
  in {$3$}-manifolds.
\newblock {\em Ann. of Math. (2) 127}, 2 (1988), 403--456.

\bibitem{MS3}
{\sc Morgan, J.~W., and Shalen, P.~B.}
\newblock Degenerations of hyperbolic structures. {III}. {A}ctions of
  {$3$}-manifold groups on trees and {T}hurston's compactness theorem.
\newblock {\em Ann. of Math. (2) 127}, 3 (1988), 457--519.

\bibitem{OhT}
{\sc Ohshika, K.}
\newblock {Divergence, exotic convergence and self-bumping in quasi-Fuchsian
  spaces}.
\newblock {\em Annales de la Facult{\'e} des sciences de Toulouse :
  Math{\'e}matiques 29}, 4 (00 2020), 805 -- 895.

\bibitem{Otal-Haken}
{\sc Otal, J.-P.}
\newblock Thurston's hyperbolization of {H}aken manifolds.
\newblock In {\em Surveys in differential geometry, {V}ol. {III} ({C}ambridge,
  {MA}, 1996)}. Int. Press, Boston, MA, 1998, pp.~77--194.

\bibitem{Pa}
{\sc Paulin, F.}
\newblock Topologie de {G}romov {\'e}quivariante, structures hyperboliques et
  arbres r{\'e}els.
\newblock {\em Invent. Math. 94}, 1 (1988), 53--80.

\bibitem{Sk}
{\sc Skora, R.~K.}
\newblock Splittings of surfaces.
\newblock {\em J. Amer. Math. Soc. 9}, 2 (1996), 605--616.

\bibitem{Su}
{\sc Sullivan, D.}
\newblock {Quasiconformal homeomorphisms and dynamics. II. Structural stability
  implies hyperbolicity for Kleinian groups}.
\newblock {\em Acta Mathematica 155}, 3-4 (00 1985), 243 -- 260.

\bibitem{ThL}
{\sc Thurston, W.}
\newblock {\em The Geometry and Topology of Three-manifolds: Lecture Notes from
  Princeton University 1978-80}.
\newblock Mathematical Sciences Research Institute, notes taken by Kerckhoff,
  S. and Floyd, W.J., 1978-1980.

\bibitem{ThC}
{\sc Thurston, W., Farb, B., Gabai, D., and Kerckhoff, S.}
\newblock {\em Collected Works of William P. Thurston with Commentary vol.2}.
\newblock Collected works series. American Mathematical Society, 2022.

\bibitem{ThO}
{\sc Thurston, W.~P.}
\newblock Hyperbolic structures on 3-manifolds: Overall logic.
\newblock In {\em Collected Works of William P. Thurston with Commentary
  vol.2}. American Mathematical Society.

\bibitem{Th1}
{\sc Thurston, W.~P.}
\newblock Hyperbolic structures on {$3$}-manifolds. {I}. {D}eformation of
  acylindrical manifolds.
\newblock {\em Ann. of Math. (2) 124}, 2 (1986), 203--246.

\bibitem{Th2}
{\sc Thurston, W.~P.}
\newblock Hyperbolic structures on 3-manifolds {II}: Surface groups and
  3-manifolds which fiber over the circle.
\newblock {\em https://arxiv.org/abs/math/9801045\/} (1998).

\bibitem{Th3}
{\sc Thurston, W.~P.}
\newblock {Hyperbolic Structures on 3-manifolds, III: Deformations of
  3-manifolds with incompressible boundary}.
\newblock {\em https://arxiv.org/abs/math/9801058\/} (1998).

\bibitem{Wa}
{\sc Waldhausen, F.}
\newblock {On irreducible 3-manifolds which are sufficiently large}.
\newblock {\em Annals of Mathematics 87}, 1 (1968), 56 -- 88.

\end{thebibliography}
\end{document}